\definecolor{darkgreen}{rgb}{0,0.4,0}
\definecolor{MyDarkBlue}{rgb}{0,0.08,0.50}
\definecolor{BrickRed}{rgb}{0.65,0.08,0}
\title{Extinction probabilities for a distylous plant population modeled by an inhomogeneous random walk on the positive quadrant}
\author{Pauline Lafitte-Godillon\footnote{Ecole Centrale de Paris, Grande Voie des Vignes,
92295 Ch\^atenay-Malabry C\'edex (France) \& EPI SIMPAF, INRIA Lille
Nord-Europe, 40 avenue Halley, 59650 Villeneuve d'Ascq (France)\newline Email: \url{pauline.lafitte@ecp.fr}}, Kilian Raschel\footnote{CNRS and Universit\'e de Tours, Laboratoire de Math\'ematiques et Physique Th\'eorique, Parc de Grandmont, 37200 Tours (France)\newline Email: \url{kilian.raschel@lmpt.univ-tours.fr}}, Viet Chi Tran\footnote{Universit\'e des Sciences et Technologies Lille 1, Laboratoire Paul Painlev\'e,
59655 Villeneuve d'Ascq C\'edex (France)\newline Email: \url{chi.tran@math.univ-lille1.fr}}}
\date{\today}
\numberwithin{equation}{section}
\renewcommand{\geq}{\geqslant}
\renewcommand{\leq}{\leqslant}
\def\N{\mathbb{N}}
\def\P{\mathbb{P}}
\def\R{\mathbb{R}}
\def\E{\mathbb{E}}
\def\ind{{\mathchoice {\rm 1\mskip-4mu l} {\rm 1\mskip-4mu l}
{\rm 1\mskip-4.5mu l} {\rm 1\mskip-5mu l}}}
\def\ie{\textit{i.e.}\ }
\def\eg{\textit{e.g.}\ }
\def\etal{\textit{et al.}\ }
\newcommand{\be} {\begin{equation}}
\newcommand{\ee} {\end{equation}}
\newcommand{\bea} {\begin{eqnarray}}
\newcommand{\eea} {\end{eqnarray}}
\newcommand{\Bea} {\begin{eqnarray*}}
\newcommand{\Eea} {\end{eqnarray*}}
\newtheorem*{Thm*}{Theorem}
\theoremstyle{definition} 
\theoremstyle{definition} 
\theoremstyle{definition} 
\theoremstyle{remark}\newtheorem{Rque}{Remark}
\theoremstyle{plain}
\newtheorem{theorem}{Theorem}[section]
\newtheorem{proposition}[theorem]{Proposition}
\newtheorem{lemma}[theorem]{Lemma}
\begin{document}

\maketitle

\begin{abstract}In this paper, we study a  flower population in
  which self-reproduction is not permitted. Individuals are diploid, {that is,
    each cell contains two sets of chromosomes}, and {distylous, that is, two alleles, A and a, can
  be found at the considered locus S}. Pollen and ovules of flowers with the same
  genotype at locus S cannot mate. This prevents the pollen of a given flower to
  fecundate its {own} stigmata. Only genotypes AA and Aa can be maintained in
  the population, so that the latter can be described by a random walk in the
  positive quadrant whose components are the number of individuals of each
  genotype. This random walk is not homogeneous and its transitions depend on
  the location of the process. We are interested in the computation of the
  extinction probabilities, {as} extinction happens when one of the axis is
  reached by the process. These extinction probabilities, which depend on the
  initial condition, satisfy a doubly-indexed recurrence equation that cannot be
  solved directly. {Our contribution is twofold : on the one hand, we obtain
    an explicit, though intricate, solution through the study of the PDE solved
    by the associated generating function. On the other hand, we provide
    numerical results comparing stochastic and deterministic approximations of
    the extinction probabilities.}
\end{abstract}

\noindent \textbf{Keywords:} Inhomogeneous random walk on the positive quadrant; boundary absorption;
  transport equation; method of characteristics; self-incompatibility in flower
  populations; extinction in diploid population with sexual
  reproduction\\

\noindent \textbf{AMS:} 60G50; 60J80; 35Q92; 92D25

\section{Introduction}

We consider the model of flower population without pollen limitation introduced
in Billiard and Tran \cite{billiardtran}. The flower reproduction is sexual:
plants produce pollen that may fecundate the stigmata of other plants. We are
interested in self-incompatible reproduction, where an individual can reproduce
only with compatible partners. In particular, self-incompatible reproduction
prevents the fecundation of a plant's stigmata by its own pollen. Each plant is
diploid and characterized by the two alleles that it carries at the locus $S$,
which decide on the possible types of partners with whom the plant may reproduce
(as it encodes the recognition proteins present on the pollen and stigmata of
the plant). We consider the distyle case with only two possible types for the
alleles, $A$ or $a$. The plants thus have genotypes $AA$, $Aa$ or $aa$. The only
interesting case is when $A$ is dominant over $a$ (see \cite{billiardtran}), and
we restrict to this case in this work.  Then, the phenotype, \ie the type of
proteins carried by the pollen and stigmata, of individuals with genotypes $AA$
(resp.\ $Aa$ and $aa$) is $A$ (resp.\ $A$ and $a$). Only pollen and stigmata
with different proteins can give viable seeds, \ie pollen of a plant of
phenotype $A$ can only fecundate stigmata of a plant of phenotype $a$ and
vice-versa. It can be seen that seeds $AA$ cannot be created, since the genotype
of individuals of phenotype $a$ is necessarily $aa$ that combine only with
individuals of phenotype $A$ that have genotypes $AA$ or $Aa$, therefore we can
consider without restriction populations consisting only of individuals of
genotypes $Aa$ and $aa$. Each viable seed is then necessarily of genotype $Aa$
or $aa$ with probability $1/2$. It is assumed that ovules are produced in
continuous time at rate $r>0$ and that each ovule is fecundated to give a seed,
provided there exists compatible pollen in the population. The lifetime of each
individual follows an exponential distribution with mean $1/d$, where $d>0$. In
all the article, we consider
\begin{equation}
\label{main_assumption}
     r>d
\end{equation}
which, we will see, is the interesting case.

\unitlength=1cm
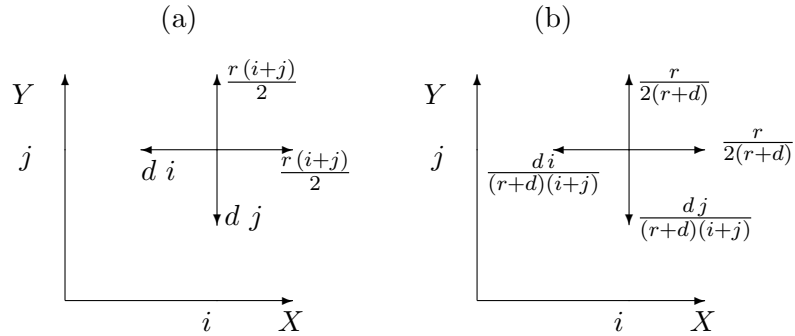
\begin{figure}[ht]
  \begin{center}
 \hspace{-0.5cm} \begin{tabular}{cc}
  (a) & (b)\\
  \\
    \begin{picture}(4,4)
    \put(1,1){\vector(1,0){3}}
    \put(1,1){\vector(0,1){3}}
    \put(3.8,0.6){$X$}
    \put(0.3,3.6){$Y$}
    \put(3,3){\vector(1,0){1}}
    \put(3,3){\vector(-1,0){1}}
    \put(3,3){\vector(0,1){1}}
    \put(3,3){\vector(0,-1){1}}
    \put(3.8,2.6){$\frac{r\, (i+j)}{2}$}
    \put(2,2.6){$d\ i$}
    \put(3.1,2){$d\ j$}
    \put(3.1,3.8){$\frac{r\, (i+j)}{2}$}
    \dottedline(1,3)(3,3)
    \dottedline(3,1)(3,3)
    \put(0.4,2.8){$j$}
    \put(2.8,0.6){$i$}
    \end{picture}\hspace{1cm}
&   \begin{picture}(4,4)
    \put(1,1){\vector(1,0){3}}
    \put(1,1){\vector(0,1){3}}
    \put(3.8,0.6){$X$}
    \put(0.3,3.6){$Y$}
    \put(3,3){\vector(1,0){1}}
    \put(3,3){\vector(-1,0){1}}
    \put(3,3){\vector(0,1){1}}
    \put(3,3){\vector(0,-1){1}}
    \put(4.2,3){$\frac{r}{2(r+d)}$}
    \put(1.1,2.6){$\frac{d\,i}{(r+d)(i+j)}$}
    \put(3.1,2){$\frac{d\,j}{(r+d)(i+j)}$}
    \put(3.1,3.8){$\frac{r}{2(r+d)}$}
    \dottedline{3}(1,3)(3,3)
    \dottedline{3}(3,1)(3,3)
    \put(0.4,2.8){$j$}
    \put(2.8,0.6){$i$}
    \end{picture}
    \end{tabular}
  \vspace{-0.3cm}
  \caption{\textit{(a) Transition rates for the continuous-time pure-jump Markov process $(X_t,Y_t)_{t\in \R_+}$. (b)~Transition probabilities of the embedded random walk, that we denote by $(X_t,Y_t)_{t\in \N}$ (here and throughout, $\N$ is the set $\{0,1,2,\ldots \}$), with an abuse of notation.}}\label{fig1}
  \end{center}
\end{figure}

\par Let us denote by $X_t$ and $Y_t$ the number of individuals of genotype $Aa$ (phenotype $A$) and $aa$ (phenotype $a$) at time $t\in \R_+$. The process $(X_t,Y_t)_{t\in \R_+}$ is a pure-jump Markov process with transitions represented in Fig.\ \ref{fig1}(a). A stochastic differential equation (SDE) representation of $(X_t,Y_t)_{t\in \R_+}$ is given in \cite{billiardtran}. Here we forget the continuous-time process, and we are interested in the embedded discrete-time Markov chain, which we denote, with an abuse of notation, by $(X_t,Y_t)_{t\in \N}$, and with transitions represented in Fig.\ \ref{fig1}(b):
\begin{eqnarray*}
 & \P_{i,j}[(X_1,Y_1)=(i-1,j)]=\dfrac{d\,i}{(r+d)(i+j)},\qquad    & \P_{i,j}[(X_1,Y_1)=(i+1,j)]=\frac{r}{2(r+d)},\\
  & \P_{i,j}[(X_1,Y_1)=(i,j-1)]=\dfrac{d\,j}{(r+d)(i+j)},\qquad         & \P_{i,j}[(X_1,Y_1)=(i,j+1)]=\frac{r}{2(r+d)},
\end{eqnarray*}
where $\P_{i,j}$ means that the process starts with the initial condition
$(X_0,Y_0)=(i,j)$. The main and profound difficulty is that this random walk is
not homogeneous in space, while techniques developed in the literature for
random walks on positive quadrants mostly focus on the homogeneous case (see \eg
Fayolle \etal \cite{fayollemalyshevmenshikov}, Klein Haneveld and Pittenger
\cite{kleinhaneveldpittenger}, Kurkova and Raschel \cite{kurkovaraschel},
Walraevens, van Leeuwaarden and Boxma \cite{walraevensleeuwaardenboxma}). We
introduce a generating function \eqref{defP} that satisfies here a partial
differential equation (PDE) of a new type that we solve. Although the
particularity of the problem is exploited, these techniques and the links
between probability and PDEs may be extended to carry out {general studies} of
inhomogeneous random walks in cones. The introduction of PDEs through generating
functions had been already used by Feller \cite{feller1} for a trunking problem
with an inhomogeneous random walk in dimension 1. To our knowledge, the case of
inhomogeneous random walks in the cone with absorbing boundaries has been left
open. In \cite{regesengupta}, the discriminatory processor-sharing queue is
considered but boundaries are not absorbing and the overall arrival rate is
constant, which is not the case in our model.

\par When one of the phenotype $A$ or $a$ disappears, reproduction becomes
impossible and {the extinction of  the system occurs}. We are interested in the probability of
extinction of $(X_t,Y_t)_{t\in \N}$ (or, equivalently, in that of
$(X_t,Y_t)_{t\in \R_+}$). Let us introduce the first time at which one of the
two types gets extinct:
\begin{equation}
\label{def_tau_0}
\tau_0=\inf\{t\in \N:\, X_t=0\mbox{ or }Y_t=0\}.
\end{equation}
For $i,j\in \N$, let us denote by
\begin{equation}
p_{i,j}=\P_{i,j}[\tau_0<\infty]\label{eq:pi}
\end{equation}
the absorption probabilities, and by
\begin{equation}
\label{defP}
P(x,y)=\sum_{i,j\geq 1}p_{i,j}x^i y^j
\end{equation}
their generating function. By symmetry arguments, we have, for all $i,j\in \N$,
\begin{equation}
p_{i,j}=p_{j,i}.\label{symmetry}
\end{equation}Moreover, for any $i,j\in\N$ such that $i=0$ or $j=0$, we have
\begin{equation}
p_{i,j}=1.\label{boundary_condition_dirichlet}
\end{equation}

In Section \ref{section:existence}, we will see that the $p_{i,j}$'s satisfy the Dirichlet problem associated with the following doubly-indexed recurrence equation
\begin{equation}
q_{i,j}=  \frac{d i}{(r+d)(i+j)}q_{i-1,j}+\frac{d j}{(r+d)(i+j)}q_{i,j-1}+\frac{r}{2(r+d)}q_{i,j+1}+\frac{r}{2(r+d)}q_{i+1,j}\label{eq3}
\end{equation}
and with the boundary condition \eqref{boundary_condition_dirichlet}. This
problem does not admit simple solutions. There is no uniqueness of solutions to
this problem.  Note that the constant sequence equal to $1$ is a
solution. However, we are interested in solutions that tend to $0$ as $i$ or $j$
tends to infinity, since, \cite{billiardtran} (see Proposition
\ref{prop:billiardtran} in this paper), estimates for $p_{i,j}$ were obtained
through probabilistic coupling techniques; they show that in the case
\eqref{main_assumption} we consider, $p_{i,j}$ is strictly less than $1$. In
fact, the $p_{i,j}$'s correspond to the smallest positive solution of the
Dirichlet problem, and are completely determined if we give the probabilities
$(p_{i,1})_{i\geq 1}$. We conclude the section with more precise estimates of
the absorption probabilities $p_{i,j}$ as the initial state $(i,j)$ goes to
infinity along one axis (Proposition \ref{AsymptoticBehaviorAP}). These new
estimates rely on Proposition \ref{prop:billiardtran} and on comparisons with
one-dimensional random walks. In Section \ref{section:green}, we consider the
generating function $P(x,y)$ associated with the $p_{i,j}$'s and show that it
satisfies a PDE, that has one and only one solution, that is computed
(Proposition \ref{explicit_one}) explicitly with a dependence on the
$(p_{i,1})_{i\geq 1}$, prompting us to use the name ``Green's function''. This
provides a new formulation of the solution of \eqref{eq3}, that is however
uneasy to work with numerically. Hence, in Section \ref{section:numerique}, we
propose two different approaches leading to numerical approximations of the
solution of the Dirichlet problem
\eqref{boundary_condition_dirichlet}--\eqref{eq3}, that are based on stochastic
and deterministic approaches.\\
In conclusion, we provide here several approaches to handle the extinction
probabilities of the inhomogeneous random walk $(X_t,Y_t)_{t\in \R_+}$ of our
problem. Estimates from \cite{billiardtran} are recalled and the recurrence
equation \eqref{eq3} is solved numerically and theoretically, pending
further investigation {of the PDE formulation}.

\section{Existence of a solution}\label{section:existence}

\subsection{Dirichlet problem}
We first establish that the extinction probabilities $p_{i,j}$'s \eqref{eq:pi} solve the Dirichlet problem \eqref{boundary_condition_dirichlet}--\eqref{eq3}.\\

\begin{proposition}\label{prop_dirichlet}
\begin{enumerate}
\item \label{fp} The extinction probabilities $(p_{i,j})_{i,j\geq 1}$ are solutions to the Dirichlet problem (\ref{eq3}) with boundary condition (\ref{boundary_condition_dirichlet}). Uniqueness of the solution may not hold, but the extinction probabilities $(p_{i,j})_{i,j\in \N}$ define
the smallest positive solution to this problem.
\item \label{sp} Let the probabilities $(p_{i,1})_{i\geq 1}$ be given. Then the probabilities $(p_{i,j})_{i,j\geq 1}$ are completely determined.
\end{enumerate}
\end{proposition}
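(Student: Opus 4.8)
The plan is to treat the two parts separately, both resting on a one-step analysis of the walk killed on the axes. For part \ref{fp}, I would first recover \eqref{eq3} by conditioning on the first jump. For $i,j\geq 1$ one has $\tau_0\geq 1$, so the Markov property gives
\[
p_{i,j}=\sum_{(k,\ell)}\P_{i,j}[(X_1,Y_1)=(k,\ell)]\,\P_{k,\ell}[\tau_0<\infty],
\]
and inserting the four transition probabilities displayed in Figure~\ref{fig1}(b) reproduces exactly \eqref{eq3}. The boundary relation \eqref{boundary_condition_dirichlet} is immediate, since $i=0$ or $j=0$ forces $\tau_0=0$ and hence $p_{i,j}=1$.

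For the minimality assertion I would show that any nonnegative solution $(q_{i,j})$ of \eqref{eq3}--\eqref{boundary_condition_dirichlet} dominates $(p_{i,j})$. Equation \eqref{eq3} says precisely that $q$ is harmonic for the walk stopped at $\tau_0$, so $(q_{X_{n\wedge\tau_0},Y_{n\wedge\tau_0}})_{n\geq 0}$ is a $\P_{i,j}$-martingale and, using $q\equiv 1$ on the axes to evaluate the absorbed part,
\[
q_{i,j}=\E_{i,j}\big[q_{X_{n\wedge\tau_0},Y_{n\wedge\tau_0}}\big]
=\P_{i,j}[\tau_0\leq n]+\E_{i,j}\big[q_{X_n,Y_n}\,\ind_{\{\tau_0>n\}}\big].
\]
Since $q\geq 0$, the remaining expectation is nonnegative, whence $q_{i,j}\geq\P_{i,j}[\tau_0\leq n]$; letting $n\to\infty$ yields $q_{i,j}\geq p_{i,j}$, so $(p_{i,j})$ is the smallest nonnegative solution. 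The constant sequence $1$ exhibits that uniqueness fails.

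For part \ref{sp}, the argument is algebraic: I would solve \eqref{eq3} for the upward term, obtaining
\[
q_{i,j+1}=\frac{2(r+d)}{r}\,q_{i,j}-\frac{2di}{r(i+j)}\,q_{i-1,j}-\frac{2dj}{r(i+j)}\,q_{i,j-1}-q_{i+1,j}.
\]
This expresses the whole row $j+1$ in terms of rows $j$ and $j-1$, the value $q_{0,j}=1$ from \eqref{boundary_condition_dirichlet} supplying the boundary term needed at $i=1$. Row $j=0$ is fixed by \eqref{boundary_condition_dirichlet} and row $j=1$ is the prescribed data $(p_{i,1})_{i\geq 1}$, so an induction on $j$ determines every row, hence all $(p_{i,j})_{i,j\geq 1}$.

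The only delicate point is the minimality step, where the martingale identity and the passage $n\to\infty$ must be justified; because each truncated expectation involves only the finitely many states reachable in $n$ steps and every summand is nonnegative, monotone convergence applies and no integrability difficulty arises. Part \ref{sp} is then a direct recursion with no obstacle.
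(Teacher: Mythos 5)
Your proposal is correct and follows essentially the same route as the paper: a first-step Markov decomposition to derive \eqref{eq3}, a martingale argument with the stopped process to establish minimality, and solving \eqref{eq3} for the upward term to obtain the row-by-row recursion in part (ii). The only cosmetic differences are that you split $\E_{i,j}[q_{X_{n\wedge\tau_0},Y_{n\wedge\tau_0}}]$ at time $n$ and pass to the limit by monotonicity of $\P_{i,j}[\tau_0\leq n]$, where the paper applies Fatou's lemma to the limit of the stopped martingale, and that your recursion runs over full rows using the boundary value $q_{0,j}=1$ instead of restricting to $i\geq j$ via the symmetry $p_{i,j}=p_{j,i}$.
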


\bigskip
\begin{proof}
%[Proof of Proposition \ref{prop_dirichlet}]
We begin with Point \ref{fp}. Equation (\ref{eq3}) is obtained by using the strong Markov property at the time of the first event.
Let us denote by $K$ the transition kernel of the discrete-time Markov chain $(X_t,Y_t)_{t\in \N^*}$; we have:
\begin{multline*}
Kf(i,j)\\
=(f(i+1,j)+f(i,j+1))\frac{r}{2(r+d)}+f(i-1,j)\frac{di}{(r+d)(i+j)}+f(i,j-1)\frac{dj}{(r+d)(i+j)}.
\end{multline*}
Following classical proofs (\eg \cite{baldimazliakpriouret,revuz}), the extinction probabilities $(p_{i,j})_{i,j\in \N}$ satisfy the equation:
\begin{equation}
\forall i,j\in \N^*,\quad f(i,j)=Kf(i,j)\qquad \mbox{ and }\qquad \forall i,j\in \N,\quad f(i,0)=f(0,j)=1.\label{pb_dirichlet}
\end{equation}
The constant solution equal to $1$ is a solution to (\ref{pb_dirichlet}). Let us prove that $(p_{i,j})_{i,j\in \N}$ is the smallest positive solution to \eqref{pb_dirichlet}. Let $f$ be another positive solution. Let us consider $M_t=f(X_{\inf\{t, \tau_0\}},Y_{\inf\{t, \tau_0\}})$, with $\tau_0$ defined in \eqref{def_tau_0}. Denoting by $(\mathcal{G}_t)_{t\in \N}$ the filtration of $(M_t)_{t\in \N}$, we have:
\begin{align*}
\E\big[M_{t+1}\, |\, \mathcal{G}_t\big]= \ & \E\big[M_{t+1}\ind_{\tau_0 \leq t}+M_{t+1}\ind_{\tau_0>t}\, |\, \mathcal{G}_t\big]\\=\ &  \E\big[M_{t}\ind_{\tau_0\leq t}+f(X_{t+1},Y_{t+1})\ind_{\tau_0>t}\, |\, \mathcal{G}_t\big]\nonumber\\
= \ & M_{t}\ind_{\tau_0\leq t}+\ind_{\tau_0>t}\E\big[f(X_{t+1},Y_{t+1})\, |\, \mathcal{G}_t\big]\\
= \ &  M_{t}\ind_{\tau_0\leq t}+\ind_{\tau_0>t}Kf(X_t,Y_t)\nonumber\\
= \ & M_{t}\ind_{\tau_0\leq t}+\ind_{\tau_0>t}f(X_t,Y_t)=M_t.
\end{align*}
Hence $(M_t)_{t\in \N}$ is a martingale, which converges on $\{\tau_0<\infty\}$ to $f(X_{\tau_0},Y_{\tau_0})=1$ (see the boundary condition in \eqref{pb_dirichlet}).
Thus by using the positivity of $f$ and Fatou's lemma, we obtain that for every $i,j\in \N$:
\begin{equation*}
f(i,j)=\E_{i,j}\big[M_0\big]=\lim_{t\rightarrow \infty}\E_{i,j}\big[M_t\big]\geq \E\big[\liminf_{t\rightarrow \infty}M_t \ind_{\tau_0<\infty}\big]=\E_{i,j}\big[\ind_{\tau_0<\infty}\big]=p_{i,j}.
\end{equation*}This concludes the proof of Point \ref{fp}.

\medskip

Let us now consider Point \ref{sp}. Assume that the probabilities $(p_{i,1})_{i\geq 1}$ are given, and let us prove, by recursion, that every $p_{i,j}$ can be computed. By symmetry, we only need to prove that this is the case for $i\geq j$. Assume\\

\begin{center}(Hrec $j$): for $j\in \N^*$ all the $p_{k,\ell}$'s for $\ell\leq j$ and $k\geq \ell$ can be computed from the $p_{i,1}$'s
\end{center}
\bigskip
\noindent and let us prove that we can determine the $p_{i,j+1}$'s for $i\geq j+1$. From (\ref{eq3}) we get:
\begin{align}
p_{i,j+1}=\frac{2(r+d)}{r}p_{i,j}-\frac{2\,d\,i}{r(i+j)}p_{i-1,j}-\frac{2\,d\,j}{r(i+j)}p_{i,j-1}-p_{i+1,j}.\label{etape7}
\end{align}All the terms in the r.h.s.\ of (\ref{etape7}) are known by (Hrec $j$), and hence $p_{i,j+1}$ can be computed for any $i\geq j+1$. This concludes the recursion.
\end{proof}

The following result shows that there is almost sure extinction in the case $r\leq d$. In the interesting case $r>d$, it also shows that there is a nontrivial solution to the Dirichlet problem \eqref{boundary_condition_dirichlet}--\eqref{eq3}.\\

\begin{proposition}[Proposition 9 of \cite{billiardtran}]\label{prop:billiardtran}We have the following regimes given the parameters $r$ and $d$:
\begin{enumerate}
\item \label{ppff} If $r\leq d$, we have almost sure extinction of the population.
\item \label{ppss} If $r>d(>0)$, then there is a strictly positive survival probability. Denoting by $(i,j)$ the initial condition, we have:
\begin{equation*}
\left(\frac{d}{r}\right)^{i+j}\leq p_{i,j}\leq \left(\frac{d}{r}\right)^{i}+\left(\frac{d}{r}\right)^{j}-\left(\frac{d}{r}\right)^{i+j}.
\end{equation*}
\end{enumerate}
\end{proposition}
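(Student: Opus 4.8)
The plan is to build everything around the one feature of the chain that \emph{is} spatially homogeneous: the total population $S_t=X_t+Y_t$. Summing the four transition probabilities of Fig.~\ref{fig1}(b), from any state $(i,j)$ with $i,j\geq 1$ the quantity $S$ increases by one with probability $r/(r+d)$ and decreases by one with probability $d/(r+d)$, \emph{independently of} $(i,j)$. Hence, up to time $\tau_0$, $(S_t)$ is a nearest-neighbour random walk on $\N$ with up/down ratio $r/d$ (equivalently, the embedded chain of a linear birth--death process with per-capita birth rate $r$ and death rate $d$). I would then reduce each statement to an explicit (super)harmonic function, reusing the martingale method already employed in the proof of Proposition~\ref{prop_dirichlet}.

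For Point~(i), assume $r\leq d$. On $\{\tau_0=\infty\}$ we have $X_t,Y_t\geq 1$, hence $S_t\geq 2$, for all $t$, so $(S_t)$ is a genuine nearest-neighbour walk with down-probability $d/(r+d)\geq 1/2$ for all time; such a walk is recurrent (if $r=d$) or has negative drift (if $r<d$), and in either case almost surely reaches level $1$, which forces $X_t=0$ or $Y_t=0$ and contradicts $\tau_0=\infty$. For the lower bound in Point~(ii), I would check that $g(i,j)=(d/r)^{i+j}$ is $K$-harmonic on $\N^*\times\N^*$ (a one-line computation, the cancellation coming from $d\cdot(d/r)^{-1}=r$). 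Then $g(X_{t\wedge\tau_0},Y_{t\wedge\tau_0})$ is a $[0,1]$-valued martingale; since $(S_t)$ is now supercritical and $S_t\to\infty$ on $\{\tau_0=\infty\}$, optional stopping and bounded convergence give $(d/r)^{i+j}=\E_{i,j}\big[(d/r)^{S_{\tau_0}}\ind_{\tau_0<\infty}\big]\leq p_{i,j}$, using $(d/r)^{S_{\tau_0}}\leq 1$.

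For the upper bound the decisive step is to guess the right superharmonic function, namely the claimed right-hand side itself, $f(i,j)=(d/r)^i+(d/r)^j-(d/r)^{i+j}=1-\big(1-(d/r)^i\big)\big(1-(d/r)^j\big)$, which satisfies $0\leq f\leq 1$ and equals $1$ on the boundary $\{i=0\}\cup\{j=0\}$. The heart of the proof is the inequality $Kf\leq f$: a direct computation collapses the four terms to
\[
Kf(i,j)-f(i,j)=\frac{(r-d)\,(i-j)\,\big((d/r)^i-(d/r)^j\big)}{2(r+d)(i+j)},
\]
which is $\leq 0$ because $r>d$ and, $x\mapsto(d/r)^x$ being decreasing, the factors $(i-j)$ and $(d/r)^i-(d/r)^j$ always carry opposite signs. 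Consequently $f(X_{t\wedge\tau_0},Y_{t\wedge\tau_0})$ is a bounded nonnegative supermartingale equal to $1$ at absorption, and the same optional-stopping argument yields $p_{i,j}\leq f(i,j)$.

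I expect the main obstacle to be exactly the discovery and verification of this supermartingale $f$, since the bound resists the obvious coordinatewise coupling. Indeed, discarding the reproduction of type $X$ that is fuelled by type $Y$ only dominates $X_t$ from below by a birth--death walk with birth rate $r/2$, giving the weaker estimate $(2d/r)^i$ and no information at all in the regime $d<r\leq 2d$; it is precisely the cross term $-(d/r)^{i+j}$ of $f$, retaining the full rate $r$, that makes the sign computation above work. Once $f$ and $g$ are in hand, the three statements follow uniformly from the martingale/optional-stopping machinery, so I would spend essentially all the effort on that algebraic sign verification.
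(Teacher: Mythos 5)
Your proof is correct, but it is worth noting that the paper itself does not prove this statement at all: it is imported verbatim as Proposition 9 of \cite{billiardtran}, where (as the introduction of the present paper records) the bounds were obtained ``through probabilistic coupling techniques'', i.e.\ by comparing each coordinate and the total population with auxiliary linear birth--death processes. Your route is genuinely different and fully self-contained within the discrete-time framework of Section \ref{section:existence}: you exploit the one homogeneous observable $S_t=X_t+Y_t$ (a nearest-neighbour walk with up/down ratio $r/d$, which settles Point (i) and supplies the transience needed for the lower bound), and you replace the couplings by explicit (super)harmonic functions for the kernel $K$ of Proposition \ref{prop_dirichlet}. I checked the two key computations: $g(i,j)=(d/r)^{i+j}$ is indeed $K$-harmonic on $\N^*\times\N^*$, and for $f(i,j)=(d/r)^i+(d/r)^j-(d/r)^{i+j}$ one finds exactly
\begin{equation*}
Kf(i,j)-f(i,j)=\frac{(r-d)(i-j)\left((d/r)^i-(d/r)^j\right)}{2(r+d)(i+j)}\leq 0,
\end{equation*}
the sign following from the monotonicity of $x\mapsto(d/r)^x$; the optional-stopping steps are justified since both $g$ and $f$ take values in $[0,1]$ and $f\equiv 1$ on the absorbing boundary. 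What your approach buys is a proof that sits entirely inside the martingale machinery already used for Proposition \ref{prop_dirichlet} (so the whole of Proposition \ref{prop:billiardtran} could be made self-contained in this paper), plus a clean explanation of why the naive coordinatewise comparison only yields $(2d/r)^i$ and fails for $d<r\leq 2d$; what the coupling proof of \cite{billiardtran} buys is a probabilistic interpretation of the upper bound as an independence/inclusion--exclusion statement, $1-p_{i,j}\geq(1-(d/r)^i)(1-(d/r)^j)$, relating survival of the two-type system to survival of two supercritical linear birth--death processes. Either way the conclusion $p_{i,j}<1$ for $i,j\geq 1$ follows, so your argument is a valid substitute for the citation.
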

In Point \ref{ppss}, only bounds, and no explicit formula, are available for the extinction probability $p_{i,j}$. The purpose of this article is to address \eqref{eq3} by considering the Green's function $P(x,y)$ introduced in \eqref{defP}.

\subsection[Absorption probability as the initial state goes to infinity along one axis]{Asymptotic behavior of the absorption probability as the initial state goes to infinity along one axis}

In this part, using the result of Proposition \ref{prop:billiardtran}, we provide more precise estimates of the asymptotic behavior of the absorption probability $p_{1,j} = p_{j,1}$ when $j\to \infty$. In particular, these estimates will  be very useful when we tackle the deterministic numerical simulations (see Section \ref{section:calculnumerique}). \\

\begin{proposition}
\label{AsymptoticBehaviorAP}
If $j\to\infty$, then
\begin{equation}
\label{conc1}
     p_{1,j} = p_{j,1}= \frac{2d}{r}\frac{1}{j}-\frac{2d(r^2+dr+2d^2)}{r^2(r+d)}\frac{1}{j^2}+O\left(\frac{1}{j^3}\right).
\end{equation}
\end{proposition}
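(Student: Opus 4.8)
The plan is to reduce everything to a one-dimensional excursion analysis of the ``small'' coordinate. By the symmetry \eqref{symmetry} it suffices to study $p_{1,j}$ for large $j$, i.e.\ the walk started at $(1,j)$ with a single $Aa$-individual. Since $Y\approx j$ is large, the first coordinate behaves like a one-dimensional walk that is strongly pushed away from $0$, so absorption on $\{X=0\}$ is essentially decided at the first time $\sigma:=\inf\{t:X_t\neq 1\}$ at which $X$ moves. At $\sigma$ the walk sits on $(0,Y_\sigma)$ or $(2,Y_\sigma)$ (and $Y_\sigma=Y_{\sigma-1}$, since the step at $\sigma$ moves $X$), and conditioning on the pre-jump position together with the transition probabilities of Fig.\ \ref{fig1}(b) yields the exact renewal identity
\begin{equation*}
p_{1,j}=\E_{1,j}\!\left[\frac{d}{d+\tfrac r2\,(1+Y_\sigma)}\right]+\E_{1,j}\!\left[\ind_{\{X_\sigma=2\}}\,p_{2,Y_\sigma}\right]+\varepsilon_j,
\end{equation*}
where $\varepsilon_j$ collects the super-exponentially small event that $Y$ reaches $0$ before $X$ moves. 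All the work is in expanding the two expectations in powers of $1/j$.

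First I would extract the leading term. During the excursion $[0,\sigma)$ the coordinate $Y$ performs $O(1)$ steps of a random walk whose increments I can compare, uniformly while $Y$ stays of order $j$, with those of a homogeneous one-dimensional walk of up-probability $\tfrac{r}{2(r+d)}$ and down-probability $\tfrac{d}{r+d}$; this gives $Y_\sigma=j+O(1)$ with exponentially small tails, whence $\E_{1,j}[d/(d+\tfrac r2(1+Y_\sigma))]=\tfrac{2d}{r}\,j^{-1}+O(j^{-2})$. For the second expectation I would show $p_{2,Y_\sigma}=O(j^{-2})$ by the same comparison one level higher: from $(2,\cdot)$ the walk must first bring $X$ back to $1$, an event of probability $O(1/j)$, and then be absorbed, again $O(1/j)$. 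Combining gives $p_{1,j}=\tfrac{2d}{r}\,j^{-1}+O(j^{-2})$, the leading order of \eqref{conc1}. Proposition \ref{prop:billiardtran} enters to control the discarded contributions: the bound $p_{i,j}\le (d/r)^i+(d/r)^j-(d/r)^{i+j}$ dominates the atypical excursions in which $Y$ drifts far from $j$ or the walk wanders deep into the interior before returning.

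To reach the $1/j^2$ coefficient I would push the expansion one order further, which requires three separate $O(j^{-2})$ contributions to be computed exactly: (i) the fluctuation of $Y$ during the first excursion, i.e.\ the first moment $\E_{1,j}[Y_\sigma-j]$, read off from the excursion law of the comparison walk; (ii) the discreteness correction carried by the factor $(1+Y_\sigma)$ rather than $Y_\sigma$; and (iii) the exact leading coefficient of $p_{2,j}$, which I would obtain by inserting the ansatz $p_{2,j}=C\,j^{-2}+o(j^{-2})$ into \eqref{eq3} at $i=2$ and solving the resulting balance, the source being the already-known $p_{1,j}\sim\tfrac{2d}{r}j^{-1}$. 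As a cross-check one can feed the ansatz $p_{1,j}=\tfrac{2d}{r}j^{-1}+B\,j^{-2}+o(j^{-2})$, together with the value of $C$, directly into \eqref{eq3} at $i=1$ using $p_{0,j}=1$ from \eqref{boundary_condition_dirichlet}, and read off $B$ by matching the coefficient of $j^{-2}$; assembling (i)--(iii) then produces the announced constant.

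The main obstacle is this second-order bookkeeping. The leading order is robust, but the $1/j^2$ coefficient is a sum of three comparably-sized terms that partially cancel, and the escape route through $(2,\cdot)$ contributes at exactly the same order as the excursion fluctuation, so nothing may be dropped. The two delicate points are therefore (a) proving that the remainder in the excursion expansion is genuinely $O(j^{-3})$ uniformly, which needs the one-dimensional comparison to be quantitative, with the error from ``unfreezing'' $Y$ (the state-dependence of the death rate $\tfrac{dY}{(r+d)(1+Y)}$) and from rare large excursions controlled via Proposition \ref{prop:billiardtran}; and (b) carrying out the coefficient matching without losing any of the three $O(j^{-2})$ contributions. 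It is precisely at this assembly step that I would most carefully recheck the stated coefficient.
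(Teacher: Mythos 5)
Your route is genuinely different from the paper's. The paper sandwiches the first coordinate between two one-dimensional birth-and-death chains in which the second coordinate is \emph{frozen} at $j\mp f(j)$ (see \eqref{encadrement}), expands the absorption probability of these frozen walks by counting jumps to the right, and separately shows that trajectories surviving beyond time $f(j)$ contribute negligibly. You instead stop at the first $X$-move $\sigma$, keep the actual fluctuating value $Y_\sigma$, and close with the strong Markov property and the leading order of $p_{2,\cdot}$. Your renewal identity is correct (your first expectation is the paper's ``zero right-jump'' term with $j\mp f(j)$ replaced by the random $Y_\sigma$; your second replaces the ``at least one right-jump'' terms), and the leading order $\frac{2d}{r}j^{-1}$ comes out identically. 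Your version has a real advantage: it sees the $O(1)$ drift of $Y$ during the excursion, which contributes at order $j^{-2}$, whereas the sandwich \eqref{encadrement} leaves a gap of order $\epsilon/j$ between its two bounds and therefore cannot by itself resolve the $1/j^2$ coefficient.

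The serious point, however, is that your method carried to the end does not yield the constant announced in \eqref{conc1} --- and your own proposed cross-check exposes this. Insert $p_{1,j}=\frac{2d}{r}j^{-1}+Bj^{-2}+o(j^{-2})$, $p_{2,j}=Cj^{-2}+o(j^{-2})$ and $p_{0,j}=1$ into the exact recurrence \eqref{eq3} at $i=1$ and match the $j^{-2}$ coefficients: everything collapses to
\begin{equation*}
B=C-\frac{4d}{r},
\end{equation*}
while the same matching at $i=2$ (using $p_{3,j}=o(j^{-2})$) forces $C=\frac{8d^2}{r^2}$, in agreement with the paper's own formula \eqref{conc}. Hence $B=\frac{8d^2}{r^2}-\frac{4d}{r}=-\frac{4d(r-2d)}{r^2}$, which equals the announced $-\frac{2d(r^2+dr+2d^2)}{r^2(r+d)}$ only when $r^2=3dr+6d^2$. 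Your excursion expansion gives the same value: $\E_{1,j}[Y_\sigma-j]=1-\frac{2d}{r}+O(1/j)$ turns the first expectation into $\frac{2d}{rj}-\frac{4d}{rj^2}+O(j^{-3})$ (note that besides your items (i) and (ii) you must also keep the shift by $2d/r$ in the denominator $d+\frac{r}{2}(1+y)$), and the second expectation is $\frac{8d^2}{r^2j^2}+o(j^{-2})$. The discrepancy with the paper is traceable to \eqref{second-term-decompote}: collapsing the two sojourns at state $1$ into the single index $\widetilde k_1=k_1+k_3$ drops the multiplicity $\widetilde k_1+1$, so the factor should be $1/(1-r_1^\pm)^2$ rather than $1/(1-r_1^\pm)$, turning $\frac{4d^2}{r(r+d)}$ into $\frac{8d^2}{r^2}$; the remaining difference is exactly the $Y$-drift term that the frozen walks cannot see. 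So your strategy is sound and I would trust the coefficient it produces, but it proves a corrected version of the statement, not the statement as written; to make it rigorous you still need to justify \emph{a priori} that the expansions of $p_{1,j}$ and $p_{2,j}$ exist (with remainders uniform enough to evaluate at the random argument $Y_\sigma$) before matching coefficients in \eqref{eq3}, for which the bound of Proposition \ref{prop:billiardtran} and your exponential-tail control of $\sigma$ suffice.
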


\begin{proof}%[Proof of Theorem \ref{AsymptoticBehaviorAP}]
In addition to $\tau_0$, defined in \eqref{def_tau_0}, we introduce
\begin{equation*}
     S = \inf\{t\in\N : Y_t=0\},
     \qquad
     T = \inf\{t\in\N : X_t=0\},
\end{equation*}
the hitting times of the horizontal axis and vertical axis, respectively. Note that we have $\tau_0 = \inf\{S,T\}$. Let $f:\N \to \N$ be a function such that $f(j)<j$ for any $j\geq 1$. In the sequel, we will choose $f(j) = \lfloor \epsilon j\rfloor $, with $\epsilon\in (0,1)$ and where $\lfloor . \rfloor$ denotes the integer part). We obviously have the identity:
\begin{equation}
\label{sep_id}
     p_{1,j} = \mathbb P_{(1,j)}[\tau_0<\infty] = \mathbb P_{(1,j)}[\tau_0\leq f(j)] + \mathbb P_{(1,j)}[f(j)<\tau_0<\infty].
\end{equation}
To prove Proposition \ref{AsymptoticBehaviorAP}, we shall give estimates for both terms in the r.h.s. of \eqref{sep_id}.

\medskip

\noindent{\it First step:} Study of $\mathbb P_{(1,j)}[\tau_0\leq f(j)]$.
Since $f(j)<j$, it is impossible, starting from $(1,j)$, to reach the horizontal axis before time $f(j)$, and we have $\mathbb P_{(1,j)}[\tau_0\leq f(j)]=\mathbb P_{(1,j)}[T\leq f(j)]$. In order to compute the latter probability, we introduce two one-dimensional random walks on $\N$, namely $X^-$ and $X^+$, which are killed at $0$, and which have the jumps
\begin{equation*}
     \mathbb P_i[X^\pm_1=i-1] = q_i^\pm,\quad
     \mathbb P_i[X^\pm_1=i+1] = p_i^\pm,\quad
     \mathbb P_i[X^\pm_1=i] = r_i^\pm,\quad
     q_i^\pm+p_i^\pm+r_i^\pm = 1,
\end{equation*}
where
\begin{equation}
\label{expression-transition-probabilities}
     q_i^\pm = \frac{d i}{(r+d)(i+j\mp f(j))},\quad
     p_i^\pm = \frac{r}{2(r+d)}.
\end{equation}Both $X^-$ and $X^+$ are (inhomogeneous) birth-and-death processes on $\mathbb N$. These random walks are implicitely parameterized by $j$. If $T^\pm=\inf\{ t\in \N : X^\pm_t =0\}$, then
\begin{equation}
\label{encadrement}
     \mathbb P_{1}[T^-\leq f(j)]\leq\mathbb P_{(1,j)}[T\leq f(j)]\leq\mathbb P_{1}[T^+\leq f(j)].
\end{equation}The quantities $\mathbb P_{1}[T^\pm\leq f(j)]$ are computable: we shall prove that
\begin{equation}
\label{bbttcc}
     \mathbb P_1[T^\pm\leq f(j)] = \frac{2d}{r}\frac{1}{(j\mp f(j))}-\frac{2d(r^2+dr+2d^2)}{r^2(r+d)}\frac{1}{(j\mp f(j))^2}+O\left(\frac{1}{(j\mp f(j))^3}\right).
\end{equation}
The main idea for proving \eqref{bbttcc} is that the $q_i^\pm$ being very small as $j\to\infty$, the only paths which will significantly contribute to the probability $\mathbb P_{1}[T^\pm\leq f(j)]$ are the ones with very few jumps to the right. Let us define
\begin{align*}
     \Lambda_{t}^{\pm}( p ) &= \{\text{the chain $X^\pm$ makes exactly $p$ jumps to the right between $0$ and $t$}\}\\
    &= \{\text{there exist $0\leq q_1<\cdots <q_p\leq t-1$ such that }\\
    & \hspace{4.8cm}\text{ $X_{q_1+1}^\pm-X_{q_1}^\pm = \cdots = X_{q_p+1}^\pm-X_{q_p}^\pm = 1$}\}.
\end{align*}
We are entitled to write
\begin{multline}
\label{decomposition}
     \mathbb P_1[T^\pm\leq f(j)] = \mathbb P_1[T^\pm\leq f(j),\, \Lambda_{f(j)}^{\pm}(0)] + \mathbb P_1[T^\pm\leq f(j),\, \Lambda_{f(j)}^{\pm}(1)]\\ +  \mathbb P_1[T^\pm\leq f(j),\, \cup_{p\geq 2}\Lambda_{f(j)}^{\pm}( p)],
\end{multline}
and we now separately analyze the three terms in the right-hand side of \eqref{decomposition}. First:
\begin{align}
     \mathbb P_1[T^\pm\leq f(j),\, \Lambda_{f(j)}^{\pm}(0)] = & \sum_{k=1}^{f(j)}\mathbb P_1[T^\pm = k,\, \Lambda_{f(j)}^{\pm}(0)]
     =    \sum_{k=1}^{f(j)} {(r_1^\pm)}^{k-1}q_1^\pm \nonumber\\
     = & \frac{q_1^\pm}{1-r_1^\pm} \big(1-(r_1^\pm)^{f(j)}\big)%\nonumber\\
= \frac{q_1^\pm}{1-r_1^\pm}(1+O({(r_1^\pm)}^{f(j)})).\label{first-term-decompo}
\end{align}A Taylor expansion of $q_1^\pm/(1-r_1^\pm)$ according to the powers of $1/(j\mp f(j))$ together with the fact that $(r_1^\pm)^{f(j)}=o(1/(j\mp f(j))^3)$ provides that:
\begin{equation}
     \mathbb P_1[T^\pm\leq f(j),\, \Lambda_{f(j)}^{\pm}(0)] = \frac{2d}{r(j\mp f(j))}\left(1-\frac{1+\frac{2d}{r}}{j\mp f(j)}+O\left(\frac{1}{(j\mp f(j))^2}\right)\right).\label{etape8}
\end{equation}
We now consider the second term in the right-hand side of \eqref{decomposition}. On the event $\Lambda_{f(j)}^{\pm}(1)$, $X^\pm$ first stays a time $k_1$ at $1$, then jumps to $2$, where it remains $k_2$ unit of times; it next goes to $1$, and, after a time $k_3$, jumps to $0$. Further, since $T^\pm\leq f(j)$, we have $k_1+1+k_2+1+k_3+1\leq f(j)$. Denoting by $\widetilde{k}_1=k_1+k_3$ the time spent in position $1$, we thus have:
\begin{align}
\lefteqn{     \mathbb P_1[T^\pm\leq f(j),\, \Lambda_{f(j)}^{\pm}(1)] }\nonumber\\
=& \sum_{k_1+k_2+k_3\leq f(j)-3} (r_1^\pm)^{k_1}p_1^\pm (r_2^\pm)^{k_2} q_2^\pm (r_1^\pm)^{k_3}q_1^\pm = p_1^\pm q_1^\pm q_2^\pm \sum_{\widetilde k_1+k_2\leq f(j)-3} (r_1^\pm)^{\widetilde k_1} (r_2^\pm)^{k_2}\nonumber\\
     = &  \frac{p_1^\pm q_1^\pm q_2^\pm}{(1- r_1^\pm)(1-r_2^\pm)}(1+O({(r_1^\pm \vee r_2^\pm )}^{f(j)-2})).\label{second-term-decompote}
\end{align}As for the first term, using the fact that ${(r_1^\pm \vee r_2^\pm )}^{f(j)-2}=o(1/(j\mp f(j))^3)$ and a Taylor expansion according to the powers of $1/(j\mp f(j))$ gives that:
\begin{align}
\mathbb P_1[T^\pm\leq f(j),\, \Lambda_{f(j)}^{\pm}(1)] =\frac{4d^2}{r(r+d)(j\mp f(j))^2}\left(1+O\left(\frac{1}{(j\mp f(j))}\right)\right).
\end{align}
Finally, let us consider the third term $\mathbb P_1[T^\pm\leq f(j),\, \cup_{p\geq 2}\Lambda_{f(j)}^{\pm}( p ) ]$. On $\cup_{p\geq 2}\Lambda_{f(j)}^{\pm}( p )$, the two first jumps to the right are either from $1$ to $2$ and $2$ to $3$, or twice from $1$ to $2$. Thus, extinction means that there is at least 3 jumps from $3$ to $2$, $2$ to $1$ and $1$ to $0$ or two jumps from $2$ to $1$ and $1$ to $0$. Since $q_i^{\pm}$ is an increasing function of $i$, we deduce:
\begin{multline}
     \mathbb P_1[T^\pm\leq f(j),\, \cup_{p\geq 2}\Lambda_{f(j)}^{\pm}( p )  ] \\ \leq  \left(q_3^{\pm}\right)^3
     =  \left(\frac{d}{r+d}\right)^3\left(\frac{3}{3+j\mp f(j)}\right)^3 = O\left(\frac{1}{(j\mp f(j))^3}\right).
\label{third-term-decompo}
\end{multline}
From \eqref{decomposition}, \eqref{first-term-decompo}, \eqref{second-term-decompote} and \eqref{third-term-decompo}, we obtain \eqref{bbttcc}.

\medskip

\noindent{\it Second step:} Study of $ \mathbb P_{(1,j)}[f(j)<\tau_0<\infty]$.
\begin{align}
\lefteqn{\mathbb P_{(1,j)}[f(j)<\tau_0<\infty]}\nonumber\\
= &    \sum_{k,\ell\geq 1} \mathbb P_{(1,j)}[f(j)<\tau_0<\infty\vert (X_{f(j)},Y_{f(j)})=(k,\ell)]\mathbb P_{(1,j)}[(X_{f(j)},Y_{f(j)})=(k,\ell)]\nonumber\\
 = &
  \sum_{k,\ell\geq 1}p_{k,\ell}\mathbb P_{(1,j)}[(X_{f(j)},Y_{f(j)})=(k,\ell)],\label{before_split}
\end{align}by using the strong Markov property.
Introduce now a function $g:\N \to \N$ such that $f(j)+g(j)<j$ for any $j\geq 1$. We can split \eqref{before_split} into
\begin{equation}
\label{probab_split}
     \sum_{k,\ell\geq g(j)}p_{k,\ell}\mathbb P_{(1,j)}[(X_{f(j)},Y_{f(j)})=(k,\ell)]+\sum_{\substack{\substack{g(j)\geq k \geq 1\\\text{and/or}}\\g(j) \geq \ell\geq 1}}p_{k,\ell}\mathbb P_{(1,j)}[(X_{f(j)},Y_{f(j)})=(k,\ell)].
\end{equation}
With Proposition \ref{prop:billiardtran} we obtain the following upper bound for the first sum in \eqref{probab_split}:
\begin{equation*}
     \sum_{k,\ell\geq g(j)}p_{k,\ell}\mathbb P_{(1,j)}[(X_{f(j)},Y_{f(j)})=(k,\ell)]\leq 2\left(\frac{d}{r}\right)^{g(j)}.
\end{equation*}
In particular, if we choose $g$ such that as $j\to\infty$, $g(j)\to\infty$ fast enough, then clearly the term above is negligible compared to \eqref{bbttcc}.
For the second sum in \eqref{probab_split},
\begin{equation*}
     \sum_{\substack{\substack{g(j)\geq k \geq 1\\\text{and/or}}\\g(j) \geq \ell\geq 1}}p_{k,\ell}\mathbb P_{(1,j)}[(X_{f(j)},Y_{f(j)})=(k,\ell)]=\sum_{g(j)\geq k\geq 1}p_{k,\ell}\mathbb P_{(1,j)}[(X_{f(j)},Y_{f(j)})=(k,\ell)],
\end{equation*}
since by assumption $j-f(j)>g(j)$ so that $Y$ cannot reach values $\ell \leq g(j)$ in $f(j)$ steps. Then we have
\begin{align}
     \sum_{g(j)\geq k\geq 1}p_{k,\ell}\mathbb P_{(1,j)}[(X_{f(j)},Y_{f(j)})=(k,\ell)]&\leq \sum_{g(j)\geq k\geq 1}\mathbb P_{(1,j)}[(X_{f(j)},Y_{f(j)})=(k,\ell)]\nonumber\\
     &\leq \mathbb P_{(1,j)}[0\leq X_{f(j)}\leq g(j)].\label{lp}
\end{align}
To obtain an upper bound for \eqref{lp} we are going to use, again, a one-dimensional random walk. Introduce $\widetilde X$, a random walk on $\N$ which is killed at $0$, homogeneous on $\N^*$ with jumps
\begin{equation*}
     \mathbb P_k[\widetilde X_1=k-1] = \widetilde q,\quad
     \mathbb P_k[\widetilde X_1=k+1] = \widetilde p,\quad
     \mathbb P_k[\widetilde X_1=k] = \widetilde r,\quad
     \widetilde q+\widetilde p+\widetilde r = 1,
\end{equation*}
where
\begin{equation*}
     \widetilde q = \frac{d (1+f(j))}{(r+d)(1+j-2 f(j))},\quad
     \widetilde p = \frac{r}{2(r+d)}.
\end{equation*}This walk is again parameterized by $j$.
By construction of $\widetilde X$, we have
\begin{equation*}
     \mathbb P_{(1,j)}[0\leq X_{f(j)}\leq g(j)]\leq \mathbb P_{1}[\widetilde X_{f(j)}\leq g(j)].
\end{equation*}
Denoting by $\widetilde m$ and $\widetilde \sigma^2$ the mean and the variance of $(\widetilde{X}_2-\widetilde{X}_1)$, respectively (they could easily be computed), we can write
\begin{equation*}
     \mathbb P_{1}[\widetilde X_{f(j)}\leq g(j)]=\mathbb P_0\left[\frac{\widetilde X_{f(j)}-\widetilde mf(j)}{\widetilde \sigma \sqrt{f(j)}}\leq \frac{g(j)-1-\widetilde m f(j)}{\widetilde \sigma \sqrt{f(j)}} \right].
\end{equation*}
By a suitable choice of the functions $f$ and $g$, for instance $f(j)=\lfloor  \epsilon j\rfloor$ with $\epsilon \in (0,1)$ and $g(j)=\lfloor  j^{3/4}\rfloor$, the central limit theorem gives that the latter is negligible compared to $\P_{(1,j)} [\tau_0\leq f(j)]$. For this last term, using \eqref{bbttcc}, \eqref{encadrement} and letting $\epsilon$ tend to 0 provides \eqref{conc1}. The proof is concluded.
\end{proof}

Let us make some remarks on possible extensions of Proposition \ref{AsymptoticBehaviorAP}. \\

\begin{Rque}
1. The proof of Proposition \ref{AsymptoticBehaviorAP} can easily be extended to the asymptotic of $p_{i,j}$ as $j\to\infty$, for any fixed value of $i$. In particular, we have the following asymptotic behavior:
\begin{equation}
      p_{i,j} = \left(\frac{2d}{r}\right)^i \frac{i !}{j^i}+O\left(\frac{1}{j^{i+1}}\right).\label{conc}
\end{equation}
2. It is possible to generalize \eqref{decomposition} by
\begin{equation}
\P_1[T^\pm \leq f(j)]=\sum_{p=0}^{k-1} \P_1[T^\pm \leq f(j),\ \Lambda_p^{f(j)}]+\P_1[T^\pm \leq f(j),\ \cup_{p\geq k}\Lambda_p^{f(j)}].
\end{equation}We can show as in the proof of Proposition \ref{AsymptoticBehaviorAP} that $\P_1[T^\pm \leq f(j),\ \cup_{p\geq k}\Lambda_p^{f(j)}]\leq (q_{k+1}^\pm)^{k+1}=o(1/j^{k+1})$ (see \eqref{third-term-decompo}) and that the probabilities $ \P_1[T^\pm \leq f(j),\ \Lambda_p^{f(j)}]$ admit Taylor expansions in powers of $1/(j\mp f(j))$ where the development for the $p^{\rm{th}}$ probability has a main term in $1/(j\mp f(j))^{p+1}$. This allows us to push the developments in \eqref{bbttcc} to higher orders.

For instance, the next term in \eqref{conc1} can be obtained by a long computation. First, we generalize \eqref{second-term-decompote} by writing that
\begin{multline}
     \mathbb P_1[T^\pm\leq f(j),\, \Lambda_2^{f(j)}] \\
     = q_1^\pm \big((p_1^\pm)^2 (q_2^\pm)^2+p_1^\pm p_2^\pm q_3^\pm q_2^\pm \big) \sum_{\widetilde k_1+ \widetilde{k}_2 + \widetilde{k}_3\leq f(j)-5} (r_1^\pm)^{\widetilde k_1} (r_2^\pm)^{\widetilde{k}_2}(r_3^\pm)^{\widetilde{k}_3}
\end{multline}where $\widetilde{k}_1$, $\widetilde{k}_2$ and $\widetilde{k}_3$ are the times spent by the random walk in the states $1$, $2$ and $3$. Then, pushing further the Taylor expansion leads to:
\begin{multline*}
     p_{1,j} = \frac{2d}{r}\frac{1}{j\mp f(j)}-\frac{2d(r^2+dr+2d^2)}{r^2(r+d)}\frac{1}{(j\mp f(j))^2}\\
     +d\ \Big(\frac{2}{r}\big(1+\frac{2d}{r}\big)^2-\frac{24d\big(\frac{r}{2}+d\big)}{r^2(r+d)}+\frac{5r^2d^2}{2(r+d)^2\big(\frac{r}{2}+d\big)^3}\Big)\frac{1}{(j\mp f(j))^3}+O\left(\frac{1}{(j\mp f(j))^4}\right).
\end{multline*}
\end{Rque}

\section{Green's function}\label{section:green}

\subsection{A functional equation for the Green's function}

In this section, we consider the Green's function $P(x,y)$ defined in \eqref{defP}
associated with a solution of \eqref{eq3} in the same spirit as what can be
found in Feller \cite[Ch.\ XVII]{feller1}. We show that it
satisfies a non-classical linear PDE that can be solved (see Proposition \ref{explicit_one}).\\

\begin{proposition}\label{prop:eqP}(i) The function $P(x,y)$ satisfies formally:
  \begin{align}
    AP(x,y)=h(x,y,P),\label{equation_P}
  \end{align}where:
  \begin{align}
    AP(x,y)= & Q(x,y)\frac{\partial P}{\partial x}(x,y)+Q(y,x)\frac{\partial P}{\partial y}(x,y)+R(x,y)P(x,y),\label{def:A}\\
    Q(x,y)= & (r+d)x-\frac{r}{2}-\frac{r}{2}\frac{x}{y}-d\,x^2,\nonumber\\
    R(x,y)= & \frac{r}{2x}+\frac{r}{2y}-dx-dy,\nonumber
  \end{align}and where:
  % \begin{align}
  %   h(x,y,P)= & -\frac{r}{2}\Bigg(\sum_{i\geq 1}p_{i,1}i x^i +\sum_{j\geq
  %     1}p_{1,j}j y^j\Bigg) +d\Bigg(\sum_{i\geq 1}x^i y +\sum_{j\geq 1}y^j
  %   x\Bigg).\label{defh}
  % \end{align}
 \begin{align}
    h(x,y,P)= & -\frac{r}{2}\Bigg(x\frac{\partial^2 P}{\partial x\partial y}(x,0)+y\frac{\partial^2 P}{\partial y\partial x}(0,y)\Bigg) +d\, xy\Bigg(\frac{1}{1-x}+\frac{1}{1-y}\Bigg).\label{defh}
  \end{align}
  (ii) For given $(p_{i,1})_{i\geq 1}$, we have a unique classical solution to
  \eqref{equation_P}-\eqref{defh} on $]0,1[\times]0,1[$.
\end{proposition}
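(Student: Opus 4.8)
The plan is to derive \eqref{equation_P} by the standard device of multiplying the recurrence by the monomial $x^iy^j$ and summing over $i,j\geq 1$. Writing \eqref{eq3} in the cleared form $(r+d)(i+j)p_{i,j}=d\,i\,p_{i-1,j}+d\,j\,p_{i,j-1}+\tfrac{r}{2}(i+j)(p_{i,j+1}+p_{i+1,j})$ and summing against $x^iy^j$, the left-hand side becomes $(r+d)(x\partial_xP+y\partial_yP)$, since the operators $x\partial_x$ and $y\partial_y$ produce the factors $i$ and $j$. Every manipulation is legitimate on $]0,1[\times]0,1[$ because $0\le p_{i,j}\le1$ forces absolute convergence of $P$ and of all series obtained by differentiation there; this is the meaning of the word ``formally''. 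The only real work is the bookkeeping of the four index shifts. The shift in $p_{i-1,j}$ produces, at $i=1$, the boundary slice $p_{0,j}=1$, whence a term $dx\,y/(1-y)$; symmetrically $p_{i,j-1}$ yields $dx\,y/(1-x)$. The inward shifts in $p_{i,j+1}$ and $p_{i+1,j}$ force, after reindexing, the subtraction of the $j=1$ and $i=1$ slices, which reproduce the mixed boundary derivatives via $\partial^2_{xy}P(x,0)=\sum_{i\ge1}i\,p_{i,1}x^{i-1}$ and $\partial^2_{yx}P(0,y)=\sum_{j\ge1}j\,p_{1,j}y^{j-1}$. Collecting the coefficients of $\partial_xP$, $\partial_yP$ and $P$ yields exactly $Q(x,y)$, $Q(y,x)$ and $R(x,y)$, while the leftover inhomogeneous terms assemble into $h(x,y,P)$ as in \eqref{defh}.

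\textbf{Part (ii).} The key preliminary observation is that, once the sequence $(p_{i,1})_{i\ge1}$ is prescribed, the right-hand side $h$ ceases to depend on the unknown. Indeed the two boundary terms in \eqref{defh} are $\partial^2_{xy}P(x,0)=\sum_{i\ge1}i\,p_{i,1}x^{i-1}$ and, using the symmetry \eqref{symmetry}, $\partial^2_{yx}P(0,y)=\sum_{j\ge1}j\,p_{j,1}y^{j-1}$, both explicit power series in the data. Hence \eqref{equation_P} reduces to a genuine \emph{linear} first-order PDE $Q(x,y)\partial_xP+Q(y,x)\partial_yP+R(x,y)P=\widetilde h(x,y)$ with known source $\widetilde h$, transport field $(Q(x,y),Q(y,x))$ and zeroth-order coefficient $R$, all analytic on $]0,1[\times]0,1[$.

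I would then solve this transport equation by the method of characteristics. Introduce the characteristic flow $\dot x=Q(x,y)$, $\dot y=Q(y,x)$; along each characteristic curve the function $t\mapsto P(x(t),y(t))$ satisfies the scalar linear ODE $\dot P+R\,P=\widetilde h$, which integrates explicitly by the variation-of-constants formula. To pin the integration constant on each characteristic I would trace the curve back toward a reference set on which $P$ is already known, namely the axes, where $P(x,0)=P(0,y)=0$ by the very definition of the sum in \eqref{defP}. Existence then follows by writing down the resulting closed formula, which is precisely Proposition \ref{explicit_one}; uniqueness follows because any two classical solutions sharing the same data satisfy the same ODE with the same boundary value along every characteristic, and hence coincide on the whole square.

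The main obstacle is the analysis of the characteristic vector field $(Q(x,y),Q(y,x))$ on the open square. One must show that its integral curves foliate $]0,1[\times]0,1[$ and that each of them can be continued back to a point where $P$ is determined, so that the variation-of-constants integral is well defined and the constant uniquely fixed. This is delicate near the boundary, where the coefficients are singular: $Q$ contains the term $-\tfrac{r}{2}\,x/y$ and $R$ the terms $\tfrac{r}{2x}+\tfrac{r}{2y}$, all of which blow up as $x$ or $y$ tends to $0$. The symmetry of the field keeps the diagonal $\{x=y\}$ invariant, which helps organize the flow, but controlling the characteristics near the singular edges, and checking that the integrated solution is genuinely $C^1$ on the whole interior, is where the real effort lies.
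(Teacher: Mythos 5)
Your Part (i) is correct and follows the paper's own route exactly: multiply the recurrence by $x^iy^j$, sum over $i,j\geq 1$, identify $(r+d)(x\partial_x+y\partial_y)P$ on the left, and track the four index shifts, with the $i=0$ and $j=0$ slices producing $d\,xy/(1-y)$ and $d\,xy/(1-x)$ and the excluded $i=1$, $j=1$ slices producing the mixed boundary derivatives. Nothing to add there.

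Part (ii) is where there is a genuine gap. The paper's own proof of (ii) is a citation of classical existence/uniqueness theory for first-order linear PDEs, with the explicit construction deferred to the characteristics analysis of Sections \ref{section:caracteristiques}--\ref{section:use-characteristics}; your plan reproduces that later construction but stops exactly at its crux. The point you flag as ``where the real effort lies'' is not a technicality one can wave at: every characteristic issued from $(x_0,y_0)\in\,]0,1[^2$ reaches the \emph{single corner point} $(0,0)$ at a finite time $s_0$ (Item \ref{to_s0} of the proposition on the flow), so the ``reference set'' carrying the data is one point, not a non-characteristic curve along the axes, and the zeroth-order coefficient $R$ blows up there. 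Consequently your uniqueness argument does not close as written: if $D=P_1-P_2$ solves the homogeneous equation, the transport identity gives $D(x_s,y_s)=D(x_0,y_0)\,e^{-\int_0^sR(x_\alpha,y_\alpha)\,\mathrm{d}\alpha}$, and since $e^{-\int_0^sR}\to 0$ as $s\to s_0$, this is compatible with \emph{any} value of $D(x_0,y_0)$; knowing only that $D$ is continuous with $D(0,0)=0$ fixes nothing. What actually pins the constant is quantitative: the integrating factor $e^{\int_0^sR}$ has a simple pole at $s_0$ while $P(x_s,y_s)=x_sy_s\sum_{i,j\geq 1}p_{i,j}x_s^{i-1}y_s^{j-1}$ vanishes to order two (each of $x_s,y_s$ has a simple zero at $s_0$), so the product tends to $0$. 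This is precisely Lemma \ref{i+j_large} of the paper, which rests on the explicit closed form \eqref{simplification_EIR} of $\exp\bigl(\int_0^uR\bigr)$ along the characteristics; without it neither the existence formula \eqref{grosse-expression} nor uniqueness (within the class of solutions vanishing to order $\geq 2$ at the origin, e.g.\ convergent power series with $i,j\geq 1$) is established. So your outline identifies the right obstacle but does not supply the lemma that overcomes it, and the uniqueness claim for arbitrary ``classical solutions'' traced back to the axes is, as stated, not a proof.
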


\bigskip

The function $h$ in \eqref{defh} only depends on a boundary condition
($\partial^2P/\partial x\partial y$ at the boundaries $x=0$ or $y=0$, \ie the
$p_{i,1}$'s for $i\in \N^*$), which is non-classical, while the operator $A$ is of first order and hence
associated with some transport equations.

\begin{proof}[Proof of Proposition \ref{prop:eqP}]Let us first establish (i). Using
  the Markov property at time $t=1$:
  \begin{equation*}
    p_{i,j}=\frac{r}{2(r+d)}(p_{i+1,j}+p_{i,j+1})+\frac{dj}{(r+d)(i+j)} p_{i,j-1}+\frac{di}{(r+d)(i+j)}p_{i-1,j},
  \end{equation*}
  then multiplying by $x^i y^j$, and summing over $i,j\in \N^*$ leads to:
  \begin{multline}
    (r+d)\sum_{i,j\geq 1}(i+j)p_{i,j}x^i y^j =  \frac{r}{2} \sum_{i,j\geq 1}(i+j)(p_{i+1,j}+p_{i,j+1})x^i y^j\\
    + d \sum_{i,j\geq 1}j p_{i,j-1}x^i y^j +d \sum_{i,j\geq 1} i p_{i-1,j} x^i
    y^j.\label{etape1}
  \end{multline}
  The l.h.s.\ of (\ref{etape1}) equals:
  \begin{equation}
    (r+d)\sum_{i,j\geq 1}(i+j)p_{i,j}x^i y^j=(r+d)\bigg(x \frac{\partial}{\partial x}+y\frac{\partial }{\partial y}\bigg)P(x,y).\label{etape6}
  \end{equation}For the r.h.s.\ of (\ref{etape1}):
  \begin{multline}
    \begin{aligned} & \frac{r}{2}\sum_{\substack{j\geq 1\\ i\geq 2}}p_{i,j}(i-1+j)x^{i-1}y^j
    +\frac{r}{2}\sum_{\substack{i\geq 1\\ j\geq 2}}p_{i,j}(i+j-1) x^i y^{j-1}\\
   & + d\sum_{\substack{i\geq 1 \\ j\geq 0}}(j+1)p_{i,j}x^i y^{j+1}+d\sum_{\substack{i\geq 0\\ j\geq 1}}(i+1)p_{i,j}x^{i+1} y^j
   \end{aligned}\\
    \begin{aligned}
      = & \frac{r}{2}\sum_{\substack{j\geq 1\\ i\geq 2}}p_{i,j}(i-1)x^{i-1}y^j +
      \frac{r}{2}\sum_{\substack{j\geq 1\\ i\geq 2}}p_{i,j} j x^{i-1}y^j \\
       & +  \frac{r}{2}\sum_{\substack{i\geq 1\\ j\geq 2}}p_{i,j}i x^i y^{j-1}+\frac{r}{2}\sum_{\substack{i\geq 1\\ j\geq 2}}p_{i,j}(j-1) x^i y^{j-1}\\
       & +  d\sum_{\substack{i\geq 1 \\ j\geq 0}}(j+1)p_{i,j}x^i
      y^{j+1}+d\sum_{\substack{i\geq 0\\ j\geq 1}}(i+1)p_{i,j}x^{i+1}
      y^j\label{etape2}.
    \end{aligned}\end{multline}
  For the first term in the r.h.s.\ of (\ref{etape2}):
  \begin{align}
    \frac{r}{2}\sum_{\substack{j\geq 1\\ i\geq 2}}p_{i,j}(i-1)x^{i-1}y^j= &
    \frac{r}{2}\sum_{\substack{j\geq 1\\ i\geq 1}}p_{i,j}(i-1)x^{i-1}y^j
    =  \frac{r}{2}\sum_{\substack{j\geq 1\\ i\geq 1}}p_{i,j}ix^{i-1}y^j-\frac{r}{2}\sum_{\substack{j\geq 1\\ i\geq 1}}p_{i,j}x^{i-1}y^j\nonumber\\
    = & \frac{r}{2}\frac{\partial}{\partial
      x}P(x,y)-\frac{r}{2}\frac{P(x,y)}{x}.\label{etape3}
  \end{align}Similar computation holds for the \nth{4} term of the r.h.s.
  of (\ref{etape2}). For the \nth{2} term:
  \begin{align}
    \frac{r}{2}\sum_{\substack{j\geq 1\\ i\geq 2}}p_{i,j} j x^{i-1}y^j= &
    \frac{r}{2}\frac{y}{x}\sum_{\substack{j\geq 1\\ i\geq 2}}p_{i,j}j x^i
    y^{j-1}%\nonumber\\
    =  \frac{r}{2}\frac{y}{x}\Bigg(\sum_{\substack{j\geq 1\\ i\geq 1}}p_{i,j} j x^{i}y^{j-1}-\sum_{j\geq 1}p_{1,j}j x y^{j-1}\Bigg)\nonumber\\
    = & \frac{r}{2}\frac{y}{x}\frac{\partial}{\partial
      y}P(x,y)-\frac{r}{2}\sum_{j\geq 1}p_{1,j}j y^j.\label{etape4}
  \end{align}We handle the \nth{3} term of the r.h.s. of
  (\ref{etape2}) similarly. Now for the \nth{5} term:
  \begin{align}
    d \sum_{\substack{i\geq 1 \\ j\geq 0}}p_{i,j} (j+1)
    x^{i}y^{j+1} %= & d \Bigg(\sum_{\substack{i\geq 1 \\ j\geq 1}}p_{i,j} (j+1) x^{i}y^{j+1}+ \sum_{i\geq 1}x^i y\Bigg)\nonumber\\
    % = & d \Bigg(y^2 \sum_{\substack{i\geq 1 \\ j\geq 1}}p_{i,j} j x^{i}y^{j-1}+ y \sum_{\substack{i\geq 1 \\ j\geq 1}}p_{i,j}  x^{i}y^{j}+ \sum_{i\geq 1}x^i y\Bigg)\nonumber\\
    = & d \Bigg(y^2 \frac{\partial}{\partial y}P(x,y)+ y P(x,y)+\sum_{i\geq
      1}x^i y\Bigg).\label{etape5}
  \end{align}Similar computation holds for the last term of (\ref{etape2}). From
  (\ref{etape6}), (\ref{etape2}), (\ref{etape3}), (\ref{etape4}) and
  (\ref{etape5}) we deduce that:
  \begin{multline*}
    (r+d)\bigg(x \frac{\partial}{\partial x}+y\frac{\partial }{\partial
      y}\bigg)P(x,y)= \frac{r}{2}\frac{\partial}{\partial
      x}P(x,y)-\frac{r}{2}\frac{P(x,y)}{x}
    +  \frac{r}{2}\frac{\partial}{\partial y}P(x,y)-\frac{r}{2}\frac{P(x,y)}{y}\\
    \begin{aligned}
      + & \frac{r}{2}\frac{y}{x}\frac{\partial}{\partial
        y}P(x,y)-\frac{r}{2}\sum_{j\geq 1}p_{1,j}j y^j
      +  \frac{r}{2}\frac{x}{y}\frac{\partial}{\partial x}P(x,y)-\frac{r}{2}\sum_{i\geq 1}p_{i,1}i x^i\\
      + & d \Bigg(y^2 \frac{\partial}{\partial y}P(x,y)+ y P(x,y)+\sum_{i\geq
        1}x^i y\Bigg) + d \Bigg(x^2 \frac{\partial}{\partial x}P(x,y)+ x
      P(x,y)+\sum_{j\geq 1}x y^j\Bigg),
    \end{aligned}
  \end{multline*}
and finally $\sum_{i\geq 1}p_{i,1}ix^i=x\frac{\partial^2P}{\partial x\partial y}(x,0)$.\\
  For point (ii),  local existence and uniqueness  stem from classical theorems
  \cite{zachmanoglou}.
Note that we construct an explicit, albeit complicated solution
\eqref{explicit_one} using the method of characteristics.
This concludes the proof.
\end{proof}

\bigskip

\begin{Rque}
  In the case of homogeneous random walks, the operator $AP(x,y)$ has the
  product form $R(x,y)P(x,y)$, see \cite{kurkovaraschel}. In some sense, this
  means that the inhomogeneity leads to partial derivatives in the functional
  equation.
\end{Rque}
\begin{Rque}
  This technique allowing to compute the solution of a discrete, linear problem
  thanks to generating series is also called the Z-transform method.
\end{Rque}

\subsection{Characteristic curves}\label{section:caracteristiques}

In Sections \ref{section:caracteristiques} and \ref{section:use-characteristics}
we establish, by the methods of characteristic equations, an explicit formula
for the solutions to \eqref{equation_P}, which proves the existence of the solution. Since $A$ is a first-order differential operator, we have a transport-like PDE. We introduce the following characteristic ordinary differential equations (ODEs). Let $(x_s,y_s)_{s\in \R_+}$ be the solution to the system:
\begin{equation}\label{system_xy}
\left\{\begin{array}{ccc}
\displaystyle \dot{x}_s&=& \displaystyle \frac{\text{d}x}{\text{d}s}(s)\ =\ Q(x_s,y_s),\phantom{{\widetilde{I}}}\\
\displaystyle \dot{y}_s&=&\displaystyle \frac{\text{d}y}{\text{d}s}(s)\ =\ Q(y_s,x_s),\phantom{\widetilde{\widetilde{\widetilde{\widetilde{I}}}}}
\end{array}\right.
\end{equation}
where $Q$ has been defined in (\ref{def:A}). The dynamical system (\ref{system_xy}) and its solutions will turn out to be decisive in the sequel---\eg in Proposition \ref{prop_resol_avec_sol_caract}, where we will use these characteristic
equations in order to express the solutions to the fundamental functional equation (\ref{equation_P}).\\

\begin{proposition}\label{prop:ODExy}
For any initial condition $(x_0,y_0)\in\mathbb{R}^{2}$ such that $x_0,y_0\neq 0$,
there exists a unique solution to \eqref{system_xy}, defined for $s\in \R$ by:
\begin{align}
x_s=  \frac{\lambda r e^{rs}+\mu d\, e^{ds}}{d(\lambda e^{rs}+\mu e^{d\,s}+1)}, \qquad
 y_s=  \frac{\lambda r e^{rs}+\mu d\, e^{ds}}{d(\lambda e^{rs}+\mu e^{d\,s}-1)},\label{equation_xys}
\end{align}with:
\begin{align}
\lambda=\frac{2d\, x_0 y_0-d(x_0+y_0)}{(x_0-y_0)(r-d)} , \qquad \mu=\frac{-2d\, x_0 y_0+r(x_0+y_0)}{(x_0-y_0)(r-d)}.\label{eq:lambdamu}
\end{align}
\end{proposition}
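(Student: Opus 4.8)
The plan is to reduce the coupled nonlinear system \eqref{system_xy} to a single scalar \emph{linear} ODE by a well-chosen change of variables, and then read off \eqref{equation_xys}. Uniqueness is the easy part: the vector field $(x,y)\mapsto(Q(x,y),Q(y,x))$ is rational, hence real-analytic and in particular locally Lipschitz, on the open set $\{x\neq 0,\,y\neq 0\}$, so for any initial datum with $x_0,y_0\neq 0$ the Cauchy--Lipschitz theorem already furnishes a unique maximal solution; the real task is to produce it explicitly. I assume throughout that $x_0\neq y_0$ (the diagonal is precisely where \eqref{eq:lambdamu} degenerates) and I record that $r\neq d$ by \eqref{main_assumption}.

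First I would pass to the variables $\sigma_s=1/x_s+1/y_s$ and $\delta_s=1/x_s-1/y_s$. Writing $a=1/x$, $b=1/y$, a direct differentiation of \eqref{system_xy} gives $\dot a=-(r+d)a+\tfrac r2 a^2+\tfrac r2 ab+d$ and the symmetric expression for $\dot b$. Adding, the mixed terms complete a perfect square, $\tfrac r2(a^2+b^2)+rab=\tfrac r2(a+b)^2$; subtracting, the quadratic part factors. One thus obtains the decoupled pair
\begin{equation*}
\dot\sigma=\tfrac r2\sigma^2-(r+d)\sigma+2d,\qquad \dot\delta=\delta\bigl(\tfrac r2\sigma-(r+d)\bigr),
\end{equation*}
in which the first line is an autonomous Riccati equation for $\sigma$ alone. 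This decoupling is the crux. I would then set $w_s=\sigma_s/\delta_s=(x_s+y_s)/(y_s-x_s)$; differentiating and inserting the two relations above, the terms proportional to $\tfrac r2\sigma-(r+d)$ cancel and leave the clean identity $\dot w=2d/\delta$ (equivalently $\dot w=2dx_sy_s/(y_s-x_s)$). Differentiating once more and using $\dot\delta/\delta=\tfrac r2\sigma-(r+d)$ together with $\sigma=w\delta=2dw/\dot w$ yields the constant-coefficient equation $\ddot w=(r+d)\dot w-rd\,w$, whose characteristic polynomial factors as $(z-r)(z-d)$. Hence $w_s=\lambda e^{rs}+\mu e^{ds}$ and $\dot w_s=\lambda r e^{rs}+\mu d e^{ds}$.

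It remains to invert the change of variables and fix the constants. Solving the pair $w=(x+y)/(y-x)$, $\dot w=2dxy/(y-x)$ for $(x,y)$---conveniently through $s=x+y$, $p=xy$, $m=y-x$ with $m^2=s^2-4p$---gives exactly $x_s=\dot w_s/(d(w_s+1))$ and $y_s=\dot w_s/(d(w_s-1))$, which is \eqref{equation_xys}. The constants are then determined by evaluating at $s=0$: from $(w_0,\dot w_0)=\bigl((x_0+y_0)/(y_0-x_0),\,2dx_0y_0/(y_0-x_0)\bigr)$ one gets the linear system $\lambda+\mu=w_0$, $r\lambda+d\mu=\dot w_0$, whose coefficient matrix is Vandermonde and invertible because $r\neq d$; solving it produces \eqref{eq:lambdamu} (up to the harmless symmetry $x_0\leftrightarrow y_0$, under which the whole system \eqref{system_xy} is invariant and which merely swaps the two denominators $d(w\pm1)$).

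I expect the main obstacle to be one of discovery rather than of difficulty: the heart of the matter is guessing the coordinates $\sigma,\delta$ and then the quotient $w=\sigma/\delta$ that simultaneously decouple the Riccati part and trivialize $\dot w$ into $2d/\delta$; once these are in hand, every remaining step is a short computation. Two points deserve care. First, the diagonal $x_0=y_0$ (where $\delta=0$ and $w$ is undefined) must be excluded and, if desired, recovered by continuity. Second, the rational expressions \eqref{equation_xys} may develop poles at the zeros of $\lambda e^{rs}+\mu e^{ds}\pm1$, so the phrase ``defined for $s\in\mathbb{R}$'' should be understood as holding on the maximal interval on which these denominators do not vanish, in agreement with the Cauchy--Lipschitz maximal solution.
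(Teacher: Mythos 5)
Your proof is correct, and it reaches the paper's linearization by a genuinely different route. The paper sets $z_s=x_s/y_s$, eliminates $x_s$ via $x_s=\dot z_s/(d(1-z_s))$ to obtain a second-order \emph{nonlinear} ODE for $z$, and only then linearizes it through the M\"obius substitution $u=(1+z)/(1-z)$. You instead pass to the reciprocals $a=1/x$, $b=1/y$ and observe that $\sigma=a+b$ satisfies an autonomous Riccati equation while $\delta=a-b$ satisfies a linear equation driven by $\sigma$; the quotient $w=\sigma/\delta$ then obeys $\ddot w-(r+d)\dot w+rd\,w=0$ directly. Since $w=(x+y)/(y-x)=u$, the two arguments converge on the same key quantity and the same constant-coefficient ODE, but your decomposition explains \emph{why} the linearization works (the hidden Riccati structure) rather than producing it by an ad hoc substitution, and it avoids the paper's intermediate nonlinear second-order equation \eqref{equation_z} entirely. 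Two of your side remarks are also on point: your constants come out with $(y_0-x_0)$ in the denominator, which matches the paper's own intermediate formula $\lambda=(2dx_0-d(1+z_0))/((1-z_0)(r-d))$ after clearing $y_0$ (and its later use of $\lambda+\mu=(x_0+y_0)/(y_0-x_0)$), so the $(x_0-y_0)$ in the displayed \eqref{eq:lambdamu} is a sign slip absorbed by the $x_0\leftrightarrow y_0$ symmetry you invoke; and your caveats about the diagonal $x_0=y_0$ (where both your $w$ and the paper's $u$ are undefined) and about the poles at the zeros of $\lambda e^{rs}+\mu e^{ds}\pm1$ are limitations shared by the paper's own proof, which it addresses only in the subsequent proposition on $s_0$ and $s_\pm$.
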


\begin{figure}[!ht] \begin{center}
\includegraphics[width=5cm,trim=5cm 5cm 5cm 5cm]{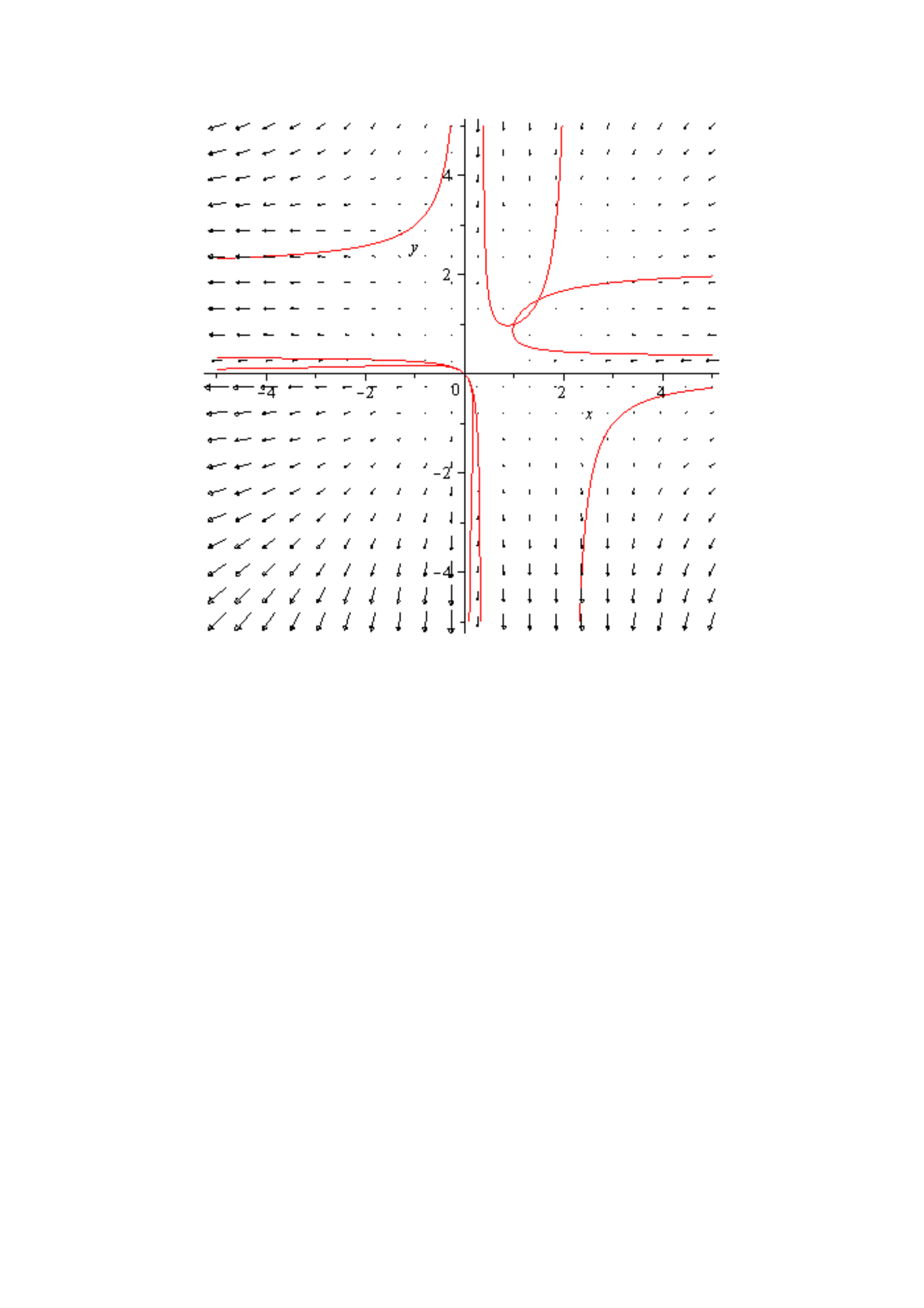}
\vspace{-4cm}\caption{{ \textit{Differential system (\ref{system_xy}): Vector field $(Q(x,y),Q(y,x))$. The plain lines correspond to the sets $\{Q(x,y)=0\}$ or $\{Q(y,x)=0\}$.}}}
\label{table_dynamic}
\end{center}\end{figure}

\vspace{-1cm}
\begin{figure}[!ht] \begin{center}
\begin{tabular}{cc}(a)\vspace{0.8cm} & (b)\vspace{0.8cm}\\
\includegraphics[width=5cm,trim=5cm 5cm 5cm 5cm]{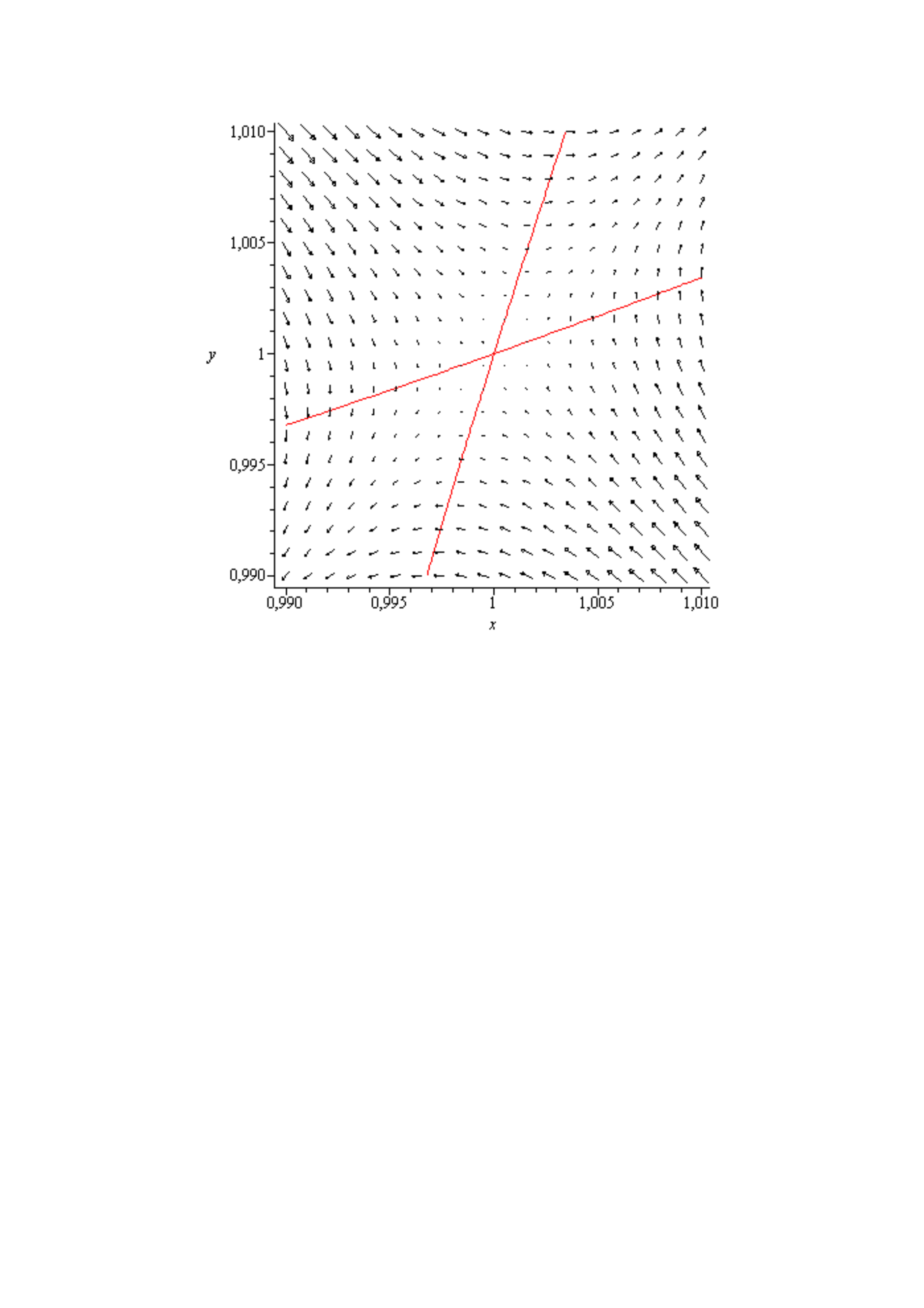}&\includegraphics[width=5cm,trim=5cm 5cm 5cm 5cm]{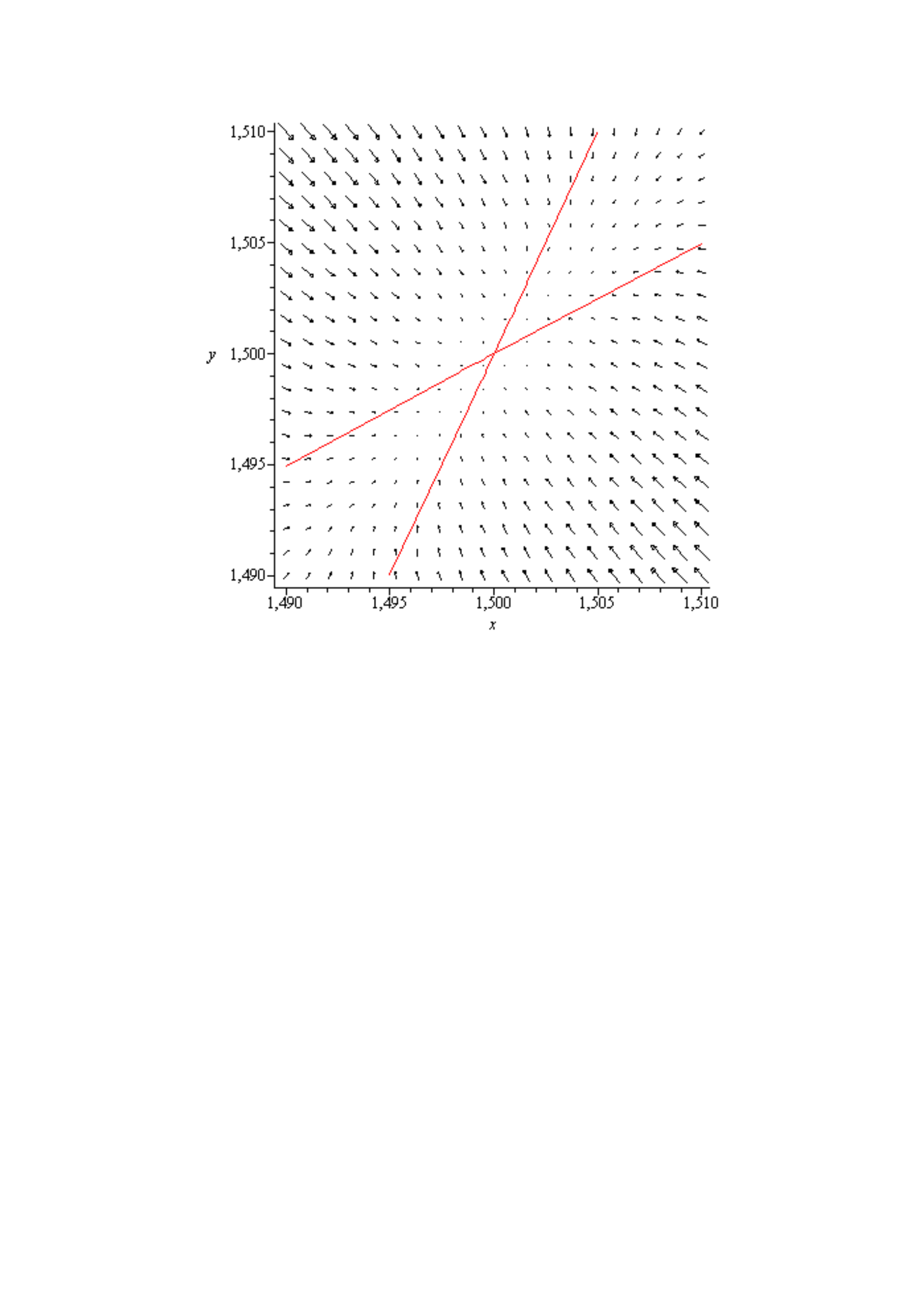}\end{tabular}
\vspace{-4cm}\caption{{ \textit{Differential system (\ref{system_xy}): (a) Neighborhood of the saddle point (1,1). (b) Neighborhood of the attractive equilibrium $(r/d,r/d)$.}}}
\label{table_dynamic2}
\end{center}\end{figure}

\begin{proof}First of all, since $Q(x,y)$ and $Q(y,x)$ are locally Lipschitz-continuous for $x,y\not= 0$, there is local existence and uniqueness. Let us introduce the new variable $z_s=x_s/y_s$, which is defined as long as $y_s\not=0$. Using the expression \eqref{def:A} of $Q$, the system (\ref{system_xy}) becomes:
\begin{align}
  \dot{x}_s= &(r+d)x_s-\frac{r}{2}(1+z_s)-dx^2_s,\label{system_xz(x)}\\
  \dot{z}_s= & \frac{\dot{x}_s y_s-x_s\dot{y}_s}{y^2_s} \nonumber\\
 = & (r+d) z_s - \frac{r}{2}\frac{z_s}{x_s}-\frac{r}{2}\frac{z_s}{y_s}-dx_s z_s-(r+d)z_s+\frac{r}{2}\frac{z_s }{y_s }+\frac{r}{2}\frac{z_s }{x_s }+dy_s z_s \nonumber\\
 = & d\,x_s (1-z_s ). \label{system_xz}
\end{align}
From (\ref{system_xz}), we obtain:
\begin{equation}
x_s =   \frac{\dot{z}_s }{d(1-z_s )},\label{equation_x}
\end{equation}
and differentiating (\ref{system_xz}) with respect to the time $s$ gives:
\begin{equation*}
\dot{x}_s = \frac{d\ddot{z}_s (1-z_s )+d\dot{z}^2_s }{d^2(1-z_s )^2}.
\end{equation*}Plugging this expression and (\ref{equation_x}) into (\ref{system_xz(x)}) provides:
\begin{equation*}
  \frac{d\ddot{z}_s (1-z_s )+d\dot{z}^2_s }{d^2(1-z_s )^2}=\frac{(r+d)\dot{z}_s }{d(1-z_s )}-\frac{r}{2}(1+z_s )-d \frac{\dot{z}^2_s }{d^2(1-z_s )^2},
 \end{equation*}
 and, therefore,
\begin{equation}
     \ddot{z}_s (1-z_s )+2 \dot{z}^2_s -(r+d)(1-z_s )\dot{z}_s +\frac{rd}{2}(1+z_s )(1-z_s )^2=0.\label{equation_z}
\end{equation}
The system (\ref{equation_x})--(\ref{equation_z}) can be solved explicitly. Let us define
$
u_s=({1+z_s})/({1-z_s})$, so that \begin{equation} z_s=\frac{u_s-1}{u_s+1},\label{rel_uz}
\end{equation}from which we obtain:
\begin{align}
\dot{z}_s=\frac{2\dot{u}_s}{(1+u_s)^2},\qquad \ddot{z}_s=\frac{2\ddot{u}_s(1+u_s)-4\dot{u}_s^2}{(1+u_s)^3}.
\end{align}Using these expressions in (\ref{equation_z}) provides:
\begin{align}
 & \frac{2\ddot{u}_s(1+u_s)-4\dot{u}_s^2}{(1+u_s)^3}\frac{2}{u_s+1}+2\frac{4\dot{u}^2_s}{(1+u_s)^4}\nonumber\\
 & \vspace{2cm}-(r+d)\frac{2}{u_s+1}\frac{2\dot{u}_s}{(1+u_s)^2}
 +\frac{rd}{2}\frac{2u_s}{u_s+1}\frac{4}{(u_s+1)^2}=0\nonumber\\
\Leftrightarrow \quad & 4 \ddot{u}_s (1+u_s)-4(r+d) \dot{u}_s(1+u_s)+4rd\, u_s(u_s+1)=0\nonumber\\
\Leftrightarrow \quad & \ddot{u}_s -(r+d)\dot{u}_s+rd\, u_s=0.
\end{align}Hence $u$ satisfies a second-order ordinary differential equation, that solves in:
\begin{equation}
u_s=\lambda e^{rs}+\mu e^{d\,s},\qquad \lambda,\mu\in \R.\label{soluce_u}
\end{equation}From (\ref{equation_x}), (\ref{rel_uz}) and (\ref{soluce_u}):
\begin{align}
z_s=  \frac{\lambda e^{rs}+\mu e^{d\,s}-1}{\lambda e^{rs}+\mu e^{d\,s}+1},\qquad
x_s=  \frac{\lambda r e^{rs}+\mu d\, e^{ds}}{d(\lambda e^{rs}+\mu e^{d\,s}+1)}.\label{soluce_zx}
\end{align}The integration constants $\lambda$ and $\mu$ can be expressed in terms of the initial conditions $x_0$ and $z_0=x_0/y_0$:
\begin{align}
\lambda=\frac{2d\, x_0-d(1+z_0)}{(1-z_0)(r-d)} , \qquad \mu=\frac{r-2dx_0+z_0r}{(1-z_0)(r-d)}.
\end{align}This yields the announced result with $y_s=x_s/z_s$.\end{proof}

\begin{Rque}
Explicit expressions for $x_s$ and $y_s$ as functions of time and parameterized by $r$ and $d$ can be obtained. Using the expressions \eqref{eq:lambdamu} of $\lambda$ and $\mu$ in \eqref{soluce_zx}
%\begin{align*}
%z_s= &   \frac{(2d\, x_0-d(1+z_0)) e^{rs}+(r-2dx_0+z_0r) e^{d\,s}-(1-z_0)(r-d)}{(2d\, x_0-d(1+z_0)) e^{rs}+(r-2dx_0+z_0r) e^{d\,s}+(1-z_0)(r-d)},\nonumber\\
%x_s= & \frac{(2d\, x_0-d(1+z_0)) r e^{rs}+(r-2dx_0+z_0r) d\, e^{ds}}{d((2d\, x_0-d(1+z_0)) e^{rs}+(r-2dx_0+z_0r) e^{d\,s}+(1-z_0)(r-d))},
%\end{align*}
and the relations between $x_s$, $y_s$ and $z_s$ provides:
\begin{align}
x_s= & \frac{-\dfrac{x_0+y_0-2x_0y_0}{x_0+y_0-2(d/r)x_0y_0}\exp(rs)+ \exp(ds)}{-\dfrac{d}{r}\dfrac{x_0+y_0-2x_0y_0}{x_0+y_0-2(d/r)x_0y_0}\exp(rs)+\exp(ds)+\dfrac{(1-d/r)(y_0-x_0)}{x_0+y_0-2(d/r)x_0y_0}},
\label{explicit_x}\\
y_s= & \frac{-\dfrac{x_0+y_0-2x_0y_0}{x_0+y_0-2(d/r)x_0y_0}  \exp(rs)+ \exp(ds)}{-\dfrac{d}{r}\dfrac{x_0+y_0-2x_0y_0}{x_0+y_0-2(d/r)x_0y_0}\exp(rs)+\exp(ds)-\dfrac{(1-d/r)(y_0-x_0)}{x_0+y_0-2(d/r)x_0y_0}}
\label{explicit_y}.
\end{align}
For the sequel, it is useful to notice that the constant
     \begin{equation}
     \label{double_ine}
          -\frac{\lambda \ r}{\mu \ d}=\frac{x_0+y_0-2x_0y_0}{x_0+y_0-2(d/r)x_0y_0}
     \end{equation}which appears in (\ref{explicit_x}) and (\ref{explicit_y}) belongs to $]0,1[$ for all $(x_{0},y_{0})\in]0,1[^{2}$; this is due to the fact that $d/r<1$.
\end{Rque}

Below, we list some properties of the solutions to the dynamical system \eqref{system_xy}. The trajectories of the solutions to \eqref{system_xy} can be decomposed into four steps.
In order to describe them, let us introduce
     \begin{equation}
     \label{def_s_0}
          s_0=\frac{1}{r-d}\log\left(-\frac{\mu \ d}{\lambda \ r}\right)=\frac{\log\left(\dfrac{x_0+y_0-2x_0y_0}{x_0+y_0-2(d/r)x_0y_0}\right)}{d-r}
     \end{equation}
and $s_\pm$ as the only positive roots of the denominators of $x_s$ and $y_s$ in \eqref{equation_xys}:
     \begin{equation}
     \label{denom_x_y}
\lambda e^{rs}+\mu e^{d\,s}\pm 1.
     \end{equation}

\begin{proposition}Let $(x_0,y_0)\in]0,1[^{2}$.
     \begin{enumerate}
          \item \label{stationary_solutions} The stationary solutions to (\ref{system_xy}) are the
          saddle point $(1,1)$ and the attractive point $(r/d,r/d)$.
          \item \label{point2} $s_0\in]0,\infty[$ and $s_{\pm}\in ]s_0,\infty[$.
          \item $\inf\{s_{+},s_{-}\}=s_{+}$ (resp.\ $s_{-}$) if and only if $y_0<x_0$ (resp.\ $y_0>x_0$).
          \item \label{point4} $\lim_{s \uparrow s_{+}}x_{s}=-\infty$ and $\lim_{s \downarrow s_{-}}y_{s}=-\infty$.
     \end{enumerate}
The next points specify the behavior of the solution to \eqref{system_xy}.
     \begin{enumerate}
     \setcounter{enumi}{4}
          \item \label{to_s0} On $[0,s_0[$, $(x_s,y_s)$ belongs to $]0,1[^{2}$ and goes to $(x_{s_{0}},y_{s_{0}})=(0,0)$
          as $s\to s_0$.
          \item \label{to_inf} On $]s_0,\inf\{s_{+},s_{-}\}[$, $(x_s,y_s)$ goes decreasingly to
          $(x_{\inf\{s_{+},s_{-}\}},y_{\inf\{s_{+},s_{-}\}})$---by ``decreasingly'' we mean that
          both coordinates decrease.
          \item \label{to_sup} On $]\inf\{s_{+},s_{-}\},\sup\{s_{+},s_{-}\}[$, $(x_s,y_s)$ goes to
          $(x_{\sup\{s_{+},s_{-}\}},y_{\sup\{s_{+},s_{-}\}})$ decreasingly.
          \item \label{to_infty} On $]\sup\{s_{+},s_{-}\},\infty[$, $(x_s,y_s)$ goes decreasingly to
          $(r/d,r/d)$.
\end{enumerate}
\end{proposition}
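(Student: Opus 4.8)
The plan is to reduce the whole statement to the behaviour of the single scalar function $u_s=\lambda e^{rs}+\mu e^{d s}$ from \eqref{soluce_u}. The point is that the common numerator of $x_s$ and $y_s$ in \eqref{equation_xys} is exactly $\dot u_s=\lambda r e^{rs}+\mu d e^{ds}$, so that $x_s=\dot u_s/\bigl(d(u_s+1)\bigr)$ and $y_s=\dot u_s/\bigl(d(u_s-1)\bigr)$; hence the times $s_0$ and $s_\pm$, the blow-ups and the monotonicity are all encoded in the graph of $u_s$. First I would pin down the signs of $\lambda,\mu$: by \eqref{double_ine} the ratio $-\lambda r/(\mu d)$ lies in $]0,1[$, so $\lambda$ and $\mu$ have opposite signs, and by the $x\leftrightarrow y$ symmetry of \eqref{system_xy} (which sends $u_s\mapsto-u_s$) I may assume $x_0<y_0$, which from \eqref{eq:lambdamu} gives the configuration $\lambda<0<\mu$, the complementary case following by symmetry. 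In that configuration $\dot u_s$ vanishes only at $s_0$, is positive before and negative after, so $u_s$ rises from $0^+$ (as $s\to-\infty$) to a single maximum $u_{s_0}=\mu e^{d s_0}(r-d)/r>0$ and then decreases to $-\infty$. This unimodality of $u_s$ is the one fact that organises all eight points.

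Point \ref{stationary_solutions} I would treat separately. Writing $Q(x,y)=Q(y,x)=0$, clearing denominators (multiply the first by $2y$, the second by $2x$) and subtracting gives $2d\,xy\,(x-y)=0$; since $x,y\neq0$ this forces $x=y$, and then $Q(x,x)=0$ reads $d x^2-(r+d)x+r=0$, with roots $x=1$ and $x=r/d$. Their nature comes from the Jacobian of $(Q(x,y),Q(y,x))$: at $(1,1)$ the eigenvalues are $r-d>0$ and $-d<0$ (a saddle), while at $(r/d,r/d)$ they are $d-r<0$ and $-r<0$ (an attractive node), both conclusions using $r>d$.

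Points \ref{point2} and \ref{point4}, and the intermediate dichotomy, then drop out of the shape of $u_s$. Since $s_0=\tfrac1{r-d}\log\bigl(-\mu d/(\lambda r)\bigr)$ with $-\mu d/(\lambda r)>1$, we get $s_0>0$. Because $u_0=(1+z_0)/(1-z_0)>1$ already (as $z_0=x_0/y_0<1$, see \eqref{rel_uz}) and $u_s$ increases up to $s_0$, the levels $u_s=+1$ and $u_s=-1$ can be attained, for positive $s$, only on the decreasing branch past $s_0$; this yields the unique positive roots $s_-$ (where $u=+1$) and $s_+$ (where $u=-1$) of \eqref{denom_x_y}, and since $u$ decreases there it meets $+1$ before $-1$, so $s_0<s_-<s_+$. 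This gives \ref{point2}, and together with the symmetric case the stated equivalence $\inf\{s_+,s_-\}=s_-\Leftrightarrow y_0>x_0$. For \ref{point4}, near $s_\pm$ the denominator $u_s\pm1$ vanishes with a sign fixed by the monotonicity of $u_s$, while the numerator $\dot u_{s_\pm}\neq0$ keeps a definite sign, so $x_s$ (resp.\ $y_s$) tends to a signed infinity read off from these two signs.

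Finally, points \ref{to_s0}--\ref{to_infty} amount to reading the signs of $\dot u_s$ and $u_s\pm1$ on the four intervals cut out by $0<s_0<s_-<s_+$ and identifying the endpoints. The limit $(x_{s_0},y_{s_0})=(0,0)$ is immediate since $\dot u_{s_0}=0$ while $u_{s_0}\pm1\neq0$, and $(x_s,y_s)\to(r/d,r/d)$ as $s\to\infty$ because $u_s$ and $\dot u_s$ are then dominated by the $\lambda e^{rs}$ term. That $(x_s,y_s)\in\,]0,1[^2$ on $[0,s_0[$ reduces, through the identities $\dot u_s-d\,u_s=\lambda(r-d)e^{rs}$ and $\dot u_s-r\,u_s=\mu(d-r)e^{ds}$, to the signs $\lambda<0$, $\mu>0$: these turn $x_s<1$ and $y_s<1$ into inequalities on $\lambda,\mu$ valid throughout the interval, while positivity follows from $\dot u_s>0$ and $u_s>1$ there. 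The monotone (``decreasingly'') assertions of \ref{to_inf}--\ref{to_infty} I would get from $\dot x_s=Q(x_s,y_s)$ and $\dot y_s=Q(y_s,x_s)$, i.e.\ by locating the trajectory relative to the nullclines $\{Q(x,y)=0\}$ and $\{Q(y,x)=0\}$ of Figure \ref{table_dynamic}, or equivalently by differentiating the explicit formula and reducing the sign via $\ddot u_s=(r+d)\dot u_s-rd\,u_s$. I expect this last bookkeeping to be the main obstacle: proving strict decrease of \emph{both} coordinates on \emph{each} phase after $s_0$, where the denominators $u_s\pm1$ change sign and the trajectory crosses the nullclines; the unimodality established in the first step is what keeps this case analysis finite and tractable.
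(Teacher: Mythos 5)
Your proposal is correct in substance and takes a genuinely different route from the paper for the core items. The paper proves Items (ii)--(iv) by studying the functions \eqref{denom_x_y} directly (positive and increasing on $[0,s_0]$, then decreasing to $-\infty$), proves the invariance of $]0,1[^2$ on $[0,s_0[$ by a phase-plane argument (the vector field points inward on the segments $\{1\}\times]0,1[$ and $]0,1[\times\{1\}$), and handles Items (vi)--(viii) by locating the trajectory in regions where $Q(x,y)<0$ and $Q(y,x)<0$ bounded by the nullclines. You instead observe that $x_s=\dot u_s/(d(u_s+1))$ and $y_s=\dot u_s/(d(u_s-1))$ with $u_s=\lambda e^{rs}+\mu e^{ds}$, and derive everything from the unimodality of $u$: this packages the paper's separate arguments about numerators and denominators into one picture, and it yields a cleaner, purely computational proof of $(x_s,y_s)\in]0,1[^2$ on $[0,s_0[$ via the identities $\dot u_s-du_s=\lambda(r-d)e^{rs}$ and $\dot u_s-ru_s=\mu(d-r)e^{ds}$ (for $y_s<1$ one does need the initial condition $y_0<1$ and the monotonicity of $e^{rs}$, not just the sign of $\lambda$ --- spell that out). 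Item (i) is handled identically to the paper. For the monotonicity in Items (vi)--(viii) you fall back on exactly the paper's nullcline argument and honestly flag it as the remaining bookkeeping; that part is therefore not an improvement, but it is not a gap either. Two remarks. First, your signs $\lambda<0<\mu$ for $x_0<y_0$ contradict the literal statement of \eqref{eq:lambdamu}, but they are the ones consistent with $u_0=\lambda+\mu=(x_0+y_0)/(y_0-x_0)$ and with the formula derived at the end of the proof of Proposition \ref{prop:ODExy}; the displayed \eqref{eq:lambdamu} carries a sign error in its denominator, so you are right and should say explicitly which convention you use. Second, your ``signed infinity read off from these two signs'' is the honest conclusion: carried out, it gives $\lim_{s\uparrow s_-}y_s=-\infty$ but $\lim_{s\downarrow s_-}y_s=+\infty$, so the second limit in Item (iv) as printed appears to have either the wrong arrow or the wrong sign --- your framework detects this, and you should state the corrected version rather than leave the sign implicit.
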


\bigskip

\begin{proof}[Proof of \textit{Item} \ref{stationary_solutions}.]
To find the stationary solutions to (\ref{system_xy}), let us solve $\dot{x}=0$ and $\dot{y}=0$.
With (\ref{system_xy}), we get $Q(x,y)=0$ and $Q(y,x)=0$. This directly implies that $(x,y)=(1,1)$
or $(x,y)=(r/d,r/d)$, see Table \ref{table_dynamic}.
%To find the stationary solutions to (\ref{system_xz}), let us solve $\dot{x}=0$ and $\dot{z}=0$. The second equation of (\ref{system_xz}) provides are $x=0$ or $z=1$. If $x=0$ the first equation implies that $z=-1$ which is not possible since $z=x/y=0$. If $z=1$ then the first equation becomes $x^2-(r+d)x/d+r/d=0$. The root are $r/d$ and $1$. Hence $x=y=r/d>1$ or $x=y=1$.\\
Let us now study the stability of these two equilibria.

At $(1,1)$, the Jacobian of (\ref{system_xy}) is:
     \begin{equation*}
          \mbox{Jac}(1,1)=\frac{r}{2}J-d I,
     \end{equation*}
where $J$ is the $2\times 2$ matrix full of ones and where $I$ is the $2\times 2$ identity matrix. The eigenvalues of $\mbox{Jac}(1,1)$ are $-d<0$ and $r-d>0$, associated with the eigenvectors $(1,-1)$ and $(1,1)$, respectively. By classical linearization methods (\eg \cite[Chap.\ 3]{verhulstbook}), we deduce that the point $(1,1)$ is a saddle point.

At $(r/d,r/d)$, the Jacobian of (\ref{system_xy}) is:
     \begin{equation*}
          \mbox{Jac}(r/d,r/d)=\frac{d}{2}J-r I.
     \end{equation*}
The eigenvalues are $-r<0$ and $d-r<0$, associated with the eigenvectors $(1,-1)$ and $(1,1)$, respectively.
The point $(r/d,r/d)$ is therefore attractive.
\end{proof}

\begin{figure}
\begin{center}
\includegraphics[width=8cm]{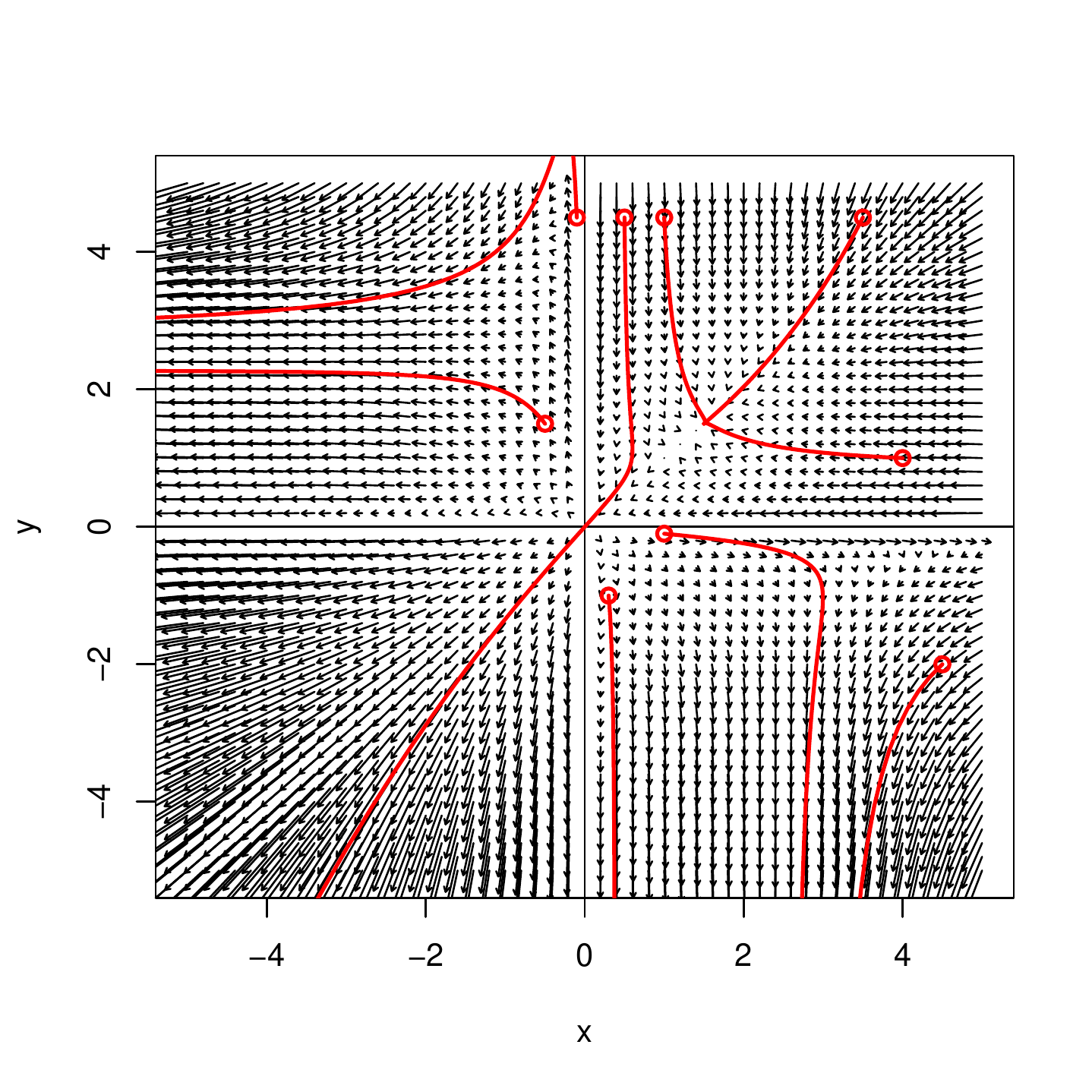}\vspace{-0.5cm}\caption{{\small
    \textit{Solutions corresponding to several initial conditions. We see that depending on the initial condition, the solutions converge of diverge to infinity.}}}
\end{center}
\end{figure}

\begin{proof}[Proof of \textit{Item} \ref{point2} to \ref{point4}]
Now we prove the different facts dealing with $s_{0}$, $s_{+}$ and $s_{-}$. First, \eqref{double_ine} and the fact that $r>d$
immediately imply that $s_0\in]0,\infty[$.
Next, we  show that (\ref{denom_x_y}) has on $[0,\infty[$ only one root, which belongs to $]s_0,\infty[$.
For this, we shall start with proving that (\ref{denom_x_y}) is positive on $[0,s_0]$. Then, we shall show that (\ref{denom_x_y}) is decreasing in $]s_0,\infty[$ and goes to $-\infty$ as $s\to \infty$.

In order to prove the first point above, it is enough to show that
(\ref{denom_x_y}) is positive at $s=0$ and increasing on $[0,s_0[$.
(\ref{denom_x_y}) is positive at $s=0$ simply because
     \begin{equation*}
\lambda +\mu \pm 1=\frac{x_0+y_0}{y_0-x_0}\pm 1=\frac{2y_0}{y_0-x_0} \mbox{ or }\frac{2x_0}{y_0-x_0}
%          -\frac{d}{r}\frac{x_0+y_0-2x_0y_0}{x_0+y_0-2(d/r)x_0y_0}+1\pm \frac{(y_0-x_0)(1-d/r)}{x_0+y_0-2(d/r)x_0y_0}
          =\frac{(1-d/r)[x_0+y_0\pm (y_0-x_0)]}{x_0+y_0-2(d/r)x_0y_0}>0.
     \end{equation*}
To check that (\ref{denom_x_y}) is increasing on $[0,s_0[$, we note that the derivative of (\ref{denom_x_y})
is positive on $[0,s_0[$---actually by construction of $s_0$.

Now we prove the second point. From \eqref{double_ine} and since $r>d$, we obtain that
(\ref{denom_x_y}) goes to $-\infty$ as $s\to \infty$. Also, by definition of $s_0$, the derivative of (\ref{denom_x_y})
is negative on $]s_0,\infty[$, (\ref{denom_x_y}) is therefore decreasing on $]s_0,\infty[$.

The fact that $\inf\{s_{+},s_{-}\}$ equals $s_{+}$ (resp.\ $s_{-}$)
if and only if $y_0<x_0$ (resp.\ $y_0>x_0$) follows directly from
(\ref{denom_x_y}).

Finally, since the numerators of $x_s$ and $y_s$ are negative on
$]s_0,\infty[$, hence in particular at $s_\pm$, it is immediate
that $\lim_{s \uparrow s_{+}}x_{s}=-\infty$ and $\lim_{s \downarrow s_{-}}y_{s}=-\infty$.
\end{proof}

\begin{proof}[Proof of \textit{Item} \ref{to_s0} to \ref{to_infty}]
Let us first consider \textit{Item} \ref{to_s0}. By definition of $s_0$, the numerators
of $x_s$ and $y_s$ in (\ref{explicit_x}) and (\ref{explicit_y})
vanish for the first time at $s_0$. Moreover, since
$s_\pm>s_0$, both denominators are non-zero at $s_0$ and $x_{s_0}=y_{s_0}=0$.
In particular, on $[0,s_0[$, we have $(x_s,y_s)\in \mathbb{R}_{+}^{2}$. In fact,
$(x_s,y_s)\in ]0,1[^{2}$. Indeed, on the segment $\{1\}\times ]0,1[$,
$\dot{x}_s<0$ whereas on $]0,1[\times \{1\}$, $\dot{y}_s<0$:
it is therefore not possible to go through these segments.
% and on $[0,s_0]$, $(x_s,y_s)$ remains inside of the square $[0,1]^{2}$.

We turn to the proof of \textit{Item} \ref{to_inf}. Thanks to (\ref{explicit_x}) and (\ref{explicit_y}), just after the time $s_0$,
$(x_s,y_s)$ belongs to the negative quadrant $\mathbb{R}_{-}^{2}$. But for any $(x,y)\in\mathbb{R}_{-}^{2}$,
$Q(x,y)\leq 0$ and $Q(y,x)\leq0$, see Table \ref{table_dynamic}, in such a way
that both $x_s$ and $y_s$ are decreasing as soon as they stay in this quarter plane,
in other words for $s\in ]s_0,\inf\{s_{+},s_{-}\}[$. At time $\inf\{s_{+},s_{-}\}$,
one (or even the two if $s_+=s_-$, \ie if $x_0=y_0$) of $x_s$ and $y_s$ becomes infinite.
In the sequel, let us assume that $\inf\{s_{+},s_{-}\}=s_+$; a
similar reasoning would hold for the symmetrical case $\inf\{s_{+},s_{-}\}=s_-$.

Let us show \textit{Item} \ref{to_sup}. Just after $s_+$, $(x_s,y_s)\in (\mathbb{R}_{+}\times \mathbb{R}_{-})\cap \{(x,y)\in \mathbb{R}^{2}
:\, Q(x,y)<0,\, Q(y,x)<0\}$. The latter set is simply connected and bounded by the curve $\{(x,y)\in
\mathbb{R}^{2} :\, Q(x,y)=0\}$, see Table \ref{table_dynamic}. Using classical arguments
(see \eg \cite{verhulstbook}), we obtain that it is not possible to go through this limiting curve
on which $\dot{x}_{s}=0$;
this is why for any $s\in]s_{+},s_{-}[$, $(x_s,y_s)$ remains inside of this set.

We conclude with the proof of \textit{Item} \ref{to_infty}. Just after the time $s_{-}$, $(x_s,y_s)\in \mathbb{R}_{+}^{2}\cap \{(x,y)\in \mathbb{R}^{2} :\, Q(x,y)<0,\, Q(y,x)<0\}$.
For the same reasons as above, $(x_s,y_s)$ cannot leave this set and actually converges to $(r/d,r/d)$.
\end{proof}

\subsection{Use of the characteristic curves to simplify the functional equation}\label{section:use-characteristics}

Let us assume the existence of a solution $P(x,y)$ to (\ref{equation_P}), and let us define
%\begin{equation*}
$g_s =P(x_s ,y_s ).$
%\end{equation*}
Then:
\begin{multline*}
\dot{g}_s =\frac{dg}{ds}(s) =  \frac{\partial P}{\partial x}(x_s ,y_s ) \frac{\text{d}x_s}{\text{d}s}+\frac{\partial P}{\partial y}(x_s ,y_s ) \frac{\text{d}y_s}{\text{d}s}\\
= \frac{\partial P}{\partial x}(x_s ,y_s )Q(x_s ,y_s )+\frac{\partial P}{\partial y}(x_s ,y_s )Q(y_s ,x_s ).
\end{multline*}
Thus, if $P$ is a solution to (\ref{equation_P}), then:
\begin{equation}
     \label{first-formulation-ODE}
     \dot{g}_s +R(x_s ,y_s )g_s =h(x_s ,y_s ,P),
\end{equation}
which looks like a first-order ODE for $g$, except that $h$ depends on the boundary condition of $P$.

We first freeze the dependence on the solution in $h$, \ie we solve the ODE \eqref{first-formulation-ODE} as if the term in the right-hand side were a known function. Using the solutions to the characteristic equations, we shall prove the following result:\\

\begin{proposition}\label{prop_resol_avec_sol_caract}
Let $h(x,y)$ be an analytical function on $[0,1[^2$. Let $(x_0,y_0)\in \R^2$. The solution to the ODE
\begin{equation}\label{ODE1}
\dot{g}_s +R(x_s ,y_s )g_s =h(x_s ,y_s ),\qquad g_0=P(x_0,y_0),\end{equation}where $(x_s,y_s)_{s\geq 0}$ are the solutions \eqref{equation_xys} to the characteristic curve starting at $(x_0,y_0)$, is given by
\begin{align}
g^h_s =&   P(x_0,y_0)\exp\left(-\int_0^s R(x_u ,y_u )\textnormal{d}u\right)+\int_0^s h(x_u ,y_u )e^{-\int_u^s R(x_\alpha ,y_\alpha )\textnormal{d}\alpha}\textnormal{d}u\nonumber\\
 =:  &  F(s,x_0,y_0,h).\label{solution_avec_h}
\end{align}
\end{proposition}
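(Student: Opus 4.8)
The statement to prove, Proposition \ref{prop_resol_avec_sol_caract}, is a completely standard first-order linear ODE solution formula: given $\dot{g}_s + R(x_s,y_s)g_s = h(x_s,y_s)$ with initial condition $g_0 = P(x_0,y_0)$, the solution is the variation-of-constants formula \eqref{solution_avec_h}. The plan is therefore to verify the integrating-factor formula directly, since all the nontrivial structure (the explicit characteristics $(x_s,y_s)$ from Proposition \ref{prop:ODExy}) has already been established and here enters only through the coefficient functions $s\mapsto R(x_s,y_s)$ and $s\mapsto h(x_s,y_s)$, which along a fixed characteristic are just ordinary functions of the single variable $s$.

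First I would observe that, once the characteristic $(x_s,y_s)$ is fixed by its starting point $(x_0,y_0)$, equation \eqref{ODE1} is a scalar linear inhomogeneous ODE in $s$ of the form $\dot{g}_s + a(s)g_s = b(s)$ with $a(s)=R(x_s,y_s)$ and $b(s)=h(x_s,y_s)$. I would introduce the integrating factor $\exp\bigl(\int_0^s R(x_u,y_u)\,\mathrm{d}u\bigr)$, multiply through, and recognize the left-hand side as a total derivative: $\tfrac{\mathrm d}{\mathrm d s}\bigl[g_s\,e^{\int_0^s R\,\mathrm du}\bigr] = h(x_s,y_s)\,e^{\int_0^s R\,\mathrm du}$. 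Integrating from $0$ to $s$ and using $g_0 = P(x_0,y_0)$ yields exactly \eqref{solution_avec_h} after rearranging the exponential as $e^{-\int_u^s R\,\mathrm d\alpha}$ inside the integral. I would then close the argument by the converse computation, differentiating the proposed $g^h_s$ in \eqref{solution_avec_h} to confirm it satisfies \eqref{ODE1}, which also secures uniqueness via the standard linear-ODE theory.

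The only points requiring genuine care are regularity and well-definedness of the integrals rather than any conceptual difficulty. Along the characteristic the coordinates $x_s$ and $y_s$ have poles at $s_+$ and $s_-$ (Proposition, \textit{Item} \ref{point4}), and $R(x,y)=\tfrac{r}{2x}+\tfrac{r}{2y}-dx-dy$ is singular where $x_s$ or $y_s$ vanishes, namely at $s_0$ where $x_{s_0}=y_{s_0}=0$ (\textit{Item} \ref{to_s0}). Thus I would restrict attention to the interval $s\in[0,s_0[$ on which, by the cited items, $(x_s,y_s)\in\,]0,1[^2$, so that $R(x_s,y_s)$ is continuous and the hypothesis that $h$ is analytic on $[0,1[^2$ guarantees $s\mapsto h(x_s,y_s)$ is continuous and the integrals in \eqref{solution_avec_h} converge; the integrating-factor manipulation is then fully justified there. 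The main (mild) obstacle is simply to be explicit about this domain restriction so that the formula \eqref{solution_avec_h} is meaningful, since outside $[0,s_0[$ the coefficients blow up; within it the result is the textbook Duhamel formula and no further subtlety arises.
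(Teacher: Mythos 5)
Your proof is correct and follows essentially the same route as the paper, which simply invokes the variation-of-constants (integrating factor) method for the inhomogeneous first-order linear ODE along a fixed characteristic. Your added care about restricting to $s\in[0,s_0[$ where the coefficients $R(x_s,y_s)$ and $h(x_s,y_s)$ remain finite is a sensible refinement that the paper leaves implicit, but it does not change the argument.
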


\bigskip

\begin{proof}Equation \eqref{ODE1} is an inhomogeneous first-order ODE. The solution to the associated homogeneous equation is:
\begin{equation*}
g_s =R(x_0 ,y_0 )\exp\left(-\int_0^s R(x_u ,y_u )\textnormal{d}u\right).
\end{equation*}The announced result is deduced from the variation of constant method.
\end{proof}
%In (\ref{defP}), we see that the solution we aim at is linear in the $p_{i,1}$'s and $p_{1,j}$'s. Since the r.h.s.\ of (\ref{solution_avec_h}) is linear in $h$, we see that we are lead back to an infinite countable system of equations in the $p_{i,1}$'s and $p_{1,j}$'s.\\

A solution $P$ to (\ref{equation_P}) hence satisfies the following functional equation for all $s$, $x_0$ and $y_0$:
\begin{align}
P(x_s ,y_s )= & P(x_0,y_0)e^{-\int_0^s R(x_u ,y_u )\textnormal{d}u}
+  \int_0^s h(x_u ,y_u ,P)e^{-\int_u^s R(x_\alpha ,y_\alpha )\textnormal{d}\alpha}\textnormal{d}u,\label{equation_fonctionnelle}
\end{align}with the function $h$ defined in (\ref{defh}). Plugging the definitions (\ref{defP}) and (\ref{defh}) in (\ref{equation_fonctionnelle}), we obtain:
\begin{align}\lefteqn{\sum_{i,j\geq 1}p_{i,j}x^i_s  y^j_s =  P(x_0,y_0)e^{-\int_0^s R(x_\alpha ,y_\alpha )\textnormal{d}\alpha}}\nonumber\\
- &\frac{r}{2}\sum_{i\geq 1}p_{i,1}\int_0^s i(x_u )^i e^{-\int_u^s R(x_\alpha ,y_\alpha )\textnormal{d}\alpha}\textnormal{d}u
-  \frac{r}{2}\sum_{j\geq 1}p_{1,j}\int_0^s  j (y_u )^j e^{-\int_u^s R(x_\alpha ,y_\alpha )\textnormal{d}\alpha}\textnormal{d}u\nonumber\\
+ &
d\int_0^s x_u y_u  \left(\frac{1}{1-x_u }+\frac{1}{1-y_u }\right)e^{-\int_u^s R(x_\alpha ,y_\alpha )\textnormal{d}\alpha}\textnormal{d}u.\label{dev_eq_fonctionnelle}
\end{align}Notice that the r.h.s.\ of (\ref{dev_eq_fonctionnelle}) depends only on the $p_{i,1}$'s and $p_{1,j}$'s, while the l.h.s.\ depends on all $p_{i,j}$'s.\\

\begin{proposition}
\label{explicit_one}
Let $s_0>0$ be defined in (\ref{def_s_0}). We have:
\begin{eqnarray}
     P(x_0,y_0) &=& \frac{r}{2}\sum_{i\geq 1}p_{i,1}\int_0^{s_0} i(x_u )^i
     e^{\int_0^u R(x_\alpha ,y_\alpha )\textnormal{d}\alpha}\textnormal{d}u\nonumber\\
     & + & \frac{r}{2}\sum_{j\geq 1}p_{1,j}\int_0^{s_0}  j (y_u )^j e^{\int_0^u
     R(x_\alpha ,y_\alpha )\textnormal{d}\alpha}\textnormal{d}u\nonumber\\
     &-& d\int_0^{s_0} x_u y_u  \left(\frac{1}{1-x_u }+\frac{1}{1-y_u }\right)
     e^{\int_0^u R(x_\alpha ,y_\alpha )\textnormal{d}\alpha}\textnormal{d}u.\label{grosse-expression}
\end{eqnarray}
\end{proposition}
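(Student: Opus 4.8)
The plan is to evaluate the functional equation \eqref{equation_fonctionnelle} along the characteristic issued from $(x_0,y_0)\in\,]0,1[^2$ and to let $s$ tend to $s_0$, the time at which this characteristic hits the origin (Item~\ref{to_s0}). One cannot simply set $s=s_0$ in \eqref{equation_fonctionnelle}: since $R$ is singular at $(0,0)$, both weights $e^{-\int_0^{s_0}R}$ and $e^{-\int_u^{s_0}R}$ collapse to $0$ and the identity degenerates to $0=0$. I would therefore first clear the exponential by multiplying \eqref{equation_fonctionnelle} by $e^{\int_0^s R(x_\alpha,y_\alpha)\mathrm{d}\alpha}$; using $e^{\int_0^s R}e^{-\int_u^s R}=e^{\int_0^u R}$ this gives, for every $s\in[0,s_0[$,
\begin{equation*}
P(x_0,y_0)=P(x_s,y_s)\,e^{\int_0^s R(x_\alpha,y_\alpha)\mathrm{d}\alpha}-\int_0^s h(x_u,y_u,P)\,e^{\int_0^u R(x_\alpha,y_\alpha)\mathrm{d}\alpha}\,\mathrm{d}u .
\end{equation*}
The whole matter is then to pass to the limit $s\to s_0$ in this identity.

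The key quantitative input is the rate at which $R$ blows up near $s_0$. Writing $N(s)=\lambda r e^{rs}+\mu d e^{ds}$ and $D(s)=\lambda e^{rs}+\mu e^{ds}$, the explicit formulas \eqref{equation_xys} give the exact identity $\frac{r}{2x_s}+\frac{r}{2y_s}=\frac{rd\,D(s)}{N(s)}$, while the remaining part $-d(x_s+y_s)$ of $R(x_s,y_s)$ stays bounded. Since $N(s_0)=0$ by definition of $s_0$, a first-order expansion together with the relation $\lambda r e^{rs_0}=-\mu d e^{ds_0}$ yields $N'(s_0)=-\mu d e^{ds_0}(r-d)$ and $D(s_0)=\mu e^{ds_0}(r-d)/r$, whence $\frac{rd\,D(s_0)}{N'(s_0)}=-1$. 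Consequently $R(x_s,y_s)\sim \frac{1}{s_0-s}$ as $s\to s_0$ --- the coefficient being exactly $1$ --- so that $\int_0^s R\sim-\log(s_0-s)$ and $e^{\int_0^s R}\sim C/(s_0-s)$ for some constant $C>0$.

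With this asymptotic in hand the two terms are readily controlled, using that $x_s,y_s$ vanish linearly in $(s_0-s)$. Bounding the boundary term by $P(x_s,y_s)\leq\sum_{i,j\geq1}x_s^i y_s^j=x_s y_s/[(1-x_s)(1-y_s)]\sim x_s y_s$ (recall $p_{i,j}\leq1$) gives $P(x_s,y_s)e^{\int_0^s R}=O(s_0-s)\to0$; here the vanishing of $P(0,0)$, immediate from the series definition \eqref{defP}, is what makes the boundary term disappear. For the integral, each contribution obtained by expanding $h$ as in \eqref{dev_eq_fonctionnelle} has an integrand behaving near $s_0$ like a bounded multiple of $(s_0-u)^{i-1}$ for the $\sum_i p_{i,1}\,i\,x_u^i$ and $\sum_j p_{1,j}\,j\,y_u^j$ terms (with $i,j\geq1$), and like $(s_0-u)$ for the $d\,x_u y_u(\cdots)$ term, all of which are integrable on $[0,s_0]$; using $p_{i,j}\leq1$ and $\sum_i i\,x_u^i=x_u/(1-x_u)^2$ one dominates the series uniformly and legitimates exchanging the infinite sums with the limit and the integral. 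Letting $s\to s_0$ thus gives $P(x_0,y_0)=-\int_0^{s_0}h(x_u,y_u,P)\,e^{\int_0^u R}\,\mathrm{d}u$, and substituting the explicit $h$ --- in particular $x\,\partial^2 P/\partial x\partial y\,(x,0)=\sum_{i\geq1}i\,p_{i,1}x^i$ --- reproduces exactly \eqref{grosse-expression}.

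The main obstacle is precisely this limiting analysis: because $e^{\int_0^s R}$ diverges, the result rests on the exact residue computation $rd\,D(s_0)/N'(s_0)=-1$, which pins the logarithmic divergence of $\int_0^s R$ at coefficient $1$ --- just strong enough to annihilate the boundary term through the quadratic decay of $x_s y_s$, yet weak enough to keep every integral in \eqref{grosse-expression} convergent.
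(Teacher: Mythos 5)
Your argument is correct and follows the same overall strategy as the paper: multiply the functional equation \eqref{equation_fonctionnelle} (equivalently \eqref{dev_eq_fonctionnelle}) by $e^{\int_0^s R(x_\alpha,y_\alpha)\,\mathrm{d}\alpha}$, let $s\to s_0$, and kill the boundary term by observing that $P(x_s,y_s)=O(x_s y_s)=O((s_0-s)^2)$ decays faster than the simple pole of $e^{\int_0^s R}$ at $s_0$. The one place where you genuinely diverge from the paper is in how that simple pole is established. The paper's Lemma \ref{i+j_large} produces a global closed-form antiderivative \eqref{simplification_EIR} of $R$ along the characteristic (verified by differentiating its logarithm) and reads off the order of the pole at $s_0$ from the simple zero of the numerator of $x_s,y_s$; you instead perform a local residue computation, writing $\frac{r}{2x_s}+\frac{r}{2y_s}=rd\,D(s)/N(s)$ with $N(s_0)=0$ by \eqref{def_s_0} and checking $rd\,D(s_0)/N'(s_0)=-1$, so that $R(x_s,y_s)\sim 1/(s_0-s)$ with coefficient exactly $1$ and hence $e^{\int_0^s R}\sim C/(s_0-s)$. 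Your route is more local and self-contained --- it isolates precisely the residue that makes the boundary term vanish while keeping the $i=1$ integrals in \eqref{grosse-expression} convergent --- whereas the paper's explicit formula \eqref{simplification_EIR} additionally exhibits the poles at $s_\pm$ and is reusable elsewhere. Your explicit justification of the integrability of each term ($(s_0-u)^{i-1}$ behaviour) and of the interchange of sum, limit and integral is a point the paper leaves implicit, and is welcome.
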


\bigskip

Before proving Proposition \ref{explicit_one}, let us show that the different
quantities that appear in its statement are well defined---indeed, this is
\textit{a priori} not clear: as $\alpha \to s_0$, $x_\alpha \to 0$ and $y_\alpha \to 0$,
in such a way that $R(x_\alpha ,y_\alpha )\to \infty$, see (\ref{def:A}).\\

\begin{lemma}
\label{i+j_large}
Let $i,j\in \mathbb{N}$. Then
     $
          \lim_{u\to s_0}(x_u )^i(y_u)^j
          e^{\int_0^u R(x_\alpha ,y_\alpha )\textnormal{d}\alpha}
     $
is finite if and only if $i+j\geq 1$---and equals zero if and only if $i+j\geq 2$.
\end{lemma}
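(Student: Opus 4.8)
The plan is to isolate the singular behaviour of the integral $\int_0^u R(x_\alpha,y_\alpha)\,\mathrm{d}\alpha$ as $u\to s_0$ and then reduce the whole statement to an elementary power-counting in $(s_0-u)$. First I would record the local behaviour of the characteristic curves near $s_0$. By \textit{Item}~\ref{to_s0} we already know $x_{s_0}=y_{s_0}=0$, and from the explicit formulas \eqref{equation_xys} both $x_s$ and $y_s$ are proportional to the common numerator $N(s)=\lambda r e^{rs}+\mu d\,e^{ds}$, which vanishes at $s_0$. A direct computation gives $N'(s_0)=\mu d\,e^{ds_0}(d-r)\neq 0$ (here $\mu\neq 0$ because $r(x_0+y_0)-2dx_0y_0>0$ on $]0,1[^2$, and $r>d$), while the denominators $\lambda e^{rs}+\mu e^{ds}\pm1$ do not vanish at $s_0$ since $s_\pm>s_0$ by \textit{Item}~\ref{point2}. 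Hence $x_u$ and $y_u$ vanish to exactly first order:
\[
 x_u = a\,(s_0-u)+o(s_0-u), \qquad y_u = b\,(s_0-u)+o(s_0-u),
\]
with $a,b>0$, the positivity being forced by $x_u,y_u>0$ on $[0,s_0[$.

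The key step is an algebraic identity turning $R$ into a logarithmic derivative. Dividing the first characteristic equation by $x_s$ gives $\dot{x}_s/x_s=Q(x_s,y_s)/x_s=(r+d)-\frac{r}{2x_s}-\frac{r}{2y_s}-dx_s$, so that, comparing with the definition of $R$ in \eqref{def:A},
\[
 R(x_s,y_s)=(r+d)-2dx_s-dy_s-\frac{\dot{x}_s}{x_s}.
\]
Integrating between $0$ and $u$, every term on the right except $-\int_0^u \dot{x}_\alpha/x_\alpha\,\mathrm{d}\alpha=\log x_0-\log x_u$ is the integral of a bounded continuous function over a bounded interval, hence converges to a finite limit as $u\to s_0$. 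Therefore $\int_0^u R(x_\alpha,y_\alpha)\,\mathrm{d}\alpha=-\log x_u+B(u)$ with $B(u)\to B(s_0)$ finite, and consequently $e^{\int_0^u R}=x_u^{-1}e^{B(u)}$.

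It then remains to substitute and count powers. We obtain
\[
 (x_u)^i(y_u)^j\,e^{\int_0^u R(x_\alpha,y_\alpha)\,\mathrm{d}\alpha}=(x_u)^{i-1}(y_u)^j\,e^{B(u)}\sim a^{\,i-1}b^{\,j}e^{B(s_0)}\,(s_0-u)^{i+j-1}
\]
as $u\to s_0$. The limit is thus finite exactly when $i+j-1\geq0$, that is $i+j\geq1$; it equals the finite nonzero constant $a^{i-1}b^{j}e^{B(s_0)}$ when $i+j=1$; it equals $0$ when $i+j-1>0$, that is $i+j\geq2$; and it diverges to $+\infty$ when $i+j=0$. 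This is precisely the claim.

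The main obstacle is the second step: one must spot the right combination exhibiting $R$ as a logarithmic derivative plus bounded terms, and then justify that the leftover integrals $\int_0^u x_\alpha\,\mathrm{d}\alpha$ and $\int_0^u y_\alpha\,\mathrm{d}\alpha$ genuinely remain bounded up to $s_0$, which follows from continuity of $x_\cdot,y_\cdot$ on the compact $[0,s_0]$. A minor but essential point is the \emph{exact} first-order vanishing of $x_u,y_u$: were $N'(s_0)$ to vanish, the power counting would change, so verifying $\mu\neq0$ and $s_\pm>s_0$ is needed; the symmetric diagonal case $x_0=y_0$ can be handled separately using $x_s\equiv y_s$.
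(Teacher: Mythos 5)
Your proof is correct, and it reaches the key fact of the lemma --- that $e^{\int_0^u R(x_\alpha,y_\alpha)\,\mathrm{d}\alpha}$ has a simple pole at $s_0$ --- by a genuinely different route than the paper. The paper exhibits an explicit closed-form antiderivative \eqref{simplification_EIR} for $\exp\big(\int_0^u R\big)$ (verified \emph{a posteriori} by differentiating its logarithm) and reads off the order of the pole at $s_0$ from the simple zero of the factor proportional to $\lambda r e^{ru}+\mu d e^{du}$; this has the side benefit of also locating the poles at $s_\pm$. You instead use the algebraic identity $R(x_s,y_s)=(r+d)-2dx_s-dy_s-\dot{x}_s/x_s$ (which follows directly from $Q(x,y)/x=(r+d)-\tfrac{r}{2x}-\tfrac{r}{2y}-dx$ and the definition of $R$ in \eqref{def:A}), so that $\int_0^u R = -\log x_u + B(u)$ with $B$ bounded up to $s_0$; this is more structural, avoids guessing the closed form, and makes transparent why the pole order of $e^{\int_0^u R}$ equals the vanishing order of $x_u$. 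Both arguments then rest on the same two supporting facts, which you verify correctly: the exact first-order vanishing of $x_u$ and $y_u$ at $s_0$ (via $N'(s_0)=\mu d e^{ds_0}(d-r)\neq 0$ and $s_\pm>s_0$, matching the paper's simple-zero argument for \eqref{simple_zero}), and the resulting power count $(s_0-u)^{i+j-1}$, including the nonzero limit at $i+j=1$ and divergence at $i=j=0$ needed for both ``if and only if'' clauses --- a point the paper leaves implicit. Your remark that the diagonal case $x_0=y_0$ (where $\lambda,\mu$ in \eqref{eq:lambdamu} degenerate) needs separate treatment is a legitimate caveat that applies equally to the paper's own computation.
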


\bigskip

\begin{proof}%[Proof of Lemma \ref{i+j_large}]
First, since the only zero of any function of the form  $\alpha \exp(a s)+\beta \exp(b s)$ with $\alpha\beta<0$ and $a\neq b$ has order one, the following function has a simple zero at $s_0$:
     \begin{equation}\label{simple_zero}
          -\dfrac{x_0+y_0-2x_0y_0}
           {x_0+y_0-2(d/r)x_0y_0}\exp(ru)+ \exp(du).
     \end{equation}
Thanks to this and since $s_+,s_->s_0$, both $x_s$ and $y_s$ have a zero of order 1 at
$s_0$.

Moreover, with $\lambda$ and $\mu$ defined in \eqref{eq:lambdamu}, we obtain that:
    % \begin{align}
       %   \exp\left(\int_0^u R(x_\alpha ,y_\alpha)\textnormal{d}\alpha\right)
          % =&\dfrac{-\dfrac{d}{r}\dfrac{x_0+y_0-2x_0y_0}{x_0+y_0-2(d/r)x_0y_0}+1+
         %  \dfrac{(1-d/r)(y_0-x_0)}{x_0+y_0-2(d/r)x_0y_0}}{-\dfrac{d}{r}
           %\dfrac{x_0+y_0-2x_0y_0}{x_0+y_0-2(d/r)x_0y_0}\exp(r u)+\exp(d u)+
          % \dfrac{(1-d/r)(y_0-x_0)}{x_0+y_0-2(d/r)x_0y_0}}\times \nonumber \\ \times
        %   &\dfrac{-\dfrac{d}{r}\dfrac{x_0+y_0-2x_0y_0}{x_0+y_0-2(d/r)x_0y_0}+1
           %-\dfrac{(1-d/r)(y_0-x_0)}{x_0+y_0-2(d/r)x_0y_0}}
           %{-\dfrac{d}{r}\dfrac{x_0+y_0-2x_0y_0}{x_0+y_0-2(d/r)x_0y_0}
           %\exp(r u)+\exp(d u)-\dfrac{(1-d/r)(y_0-x_0)}{x_0+y_0-2(d/r)x_0y_0}}\times \nonumber\\
          % \times & \dfrac{-\dfrac{x_0+y_0-2x_0y_0}{x_0+y_0-2(d/r)x_0y_0}+1}{-\dfrac{x_0+y_0-2x_0y_0}
          % {x_0+y_0-2(d/r)x_0y_0}\exp(ru)+ \exp(du)}\exp((r+d)u).\label{simplification_EIR}
     %\end{align}
     \begin{multline}
     \label{simplification_EIR}
     \exp\left(\int_0^u R(x_\alpha ,y_\alpha)\textnormal{d}\alpha\right) =
     \dfrac{\lambda/\mu+1-1/\mu}{(\lambda/\mu)\exp(r u)+\exp(d u)-1/\mu}\times \\
           \times \dfrac{\lambda/\mu+1+1/\mu}{(\lambda/\mu)\exp(r u)+\exp(d u)+1/\mu}
           \dfrac{r/d\lambda/\mu+1}{(r/d)(\lambda/\mu)\exp(ru)+ \exp(du)}\exp((r+d)u).
     \end{multline}
     It is indeed easy to check that the derivative of the logarithm of (\ref{simplification_EIR}) is
     equal to $R(x_u,y_u)$, for which we have an explicit expression, see (\ref{def:A}), (\ref{explicit_x})
     and (\ref{explicit_y}).

From \eqref{simplification_EIR}, we see that $e^{\int_0^u R(x_\alpha ,y_\alpha )\textnormal{d}\alpha}$ has three poles, namely at $s_0,s_+,s_-$.
The zero at $s_0$ has order one by using again the considerations on the zeros of \eqref{simple_zero}. In
particular, Lemma \ref{i+j_large} follows immediately.
\end{proof}

\bigskip
\begin{proof}[Proof of Proposition \ref{explicit_one}]
Start by multiplying (\ref{dev_eq_fonctionnelle}) by
$e^{\int_0^s R(x_\alpha ,y_\alpha )\textnormal{d}\alpha}$ and then
let $s\to s_0$. Since
     $
          P(x,y)=xy\sum_{i,j\geq 1}p_{i,j}x^{i-1}y^{j-1},
     $
see (\ref{defP}), and since
$\lim_{s\to s_0}x_s y_se^{\int_0^s R(x_\alpha ,y_\alpha )\textnormal{d}\alpha}=0$, see Lemma \ref{i+j_large}, we obtain that
     \begin{equation*}
          \lim_{s\to s_0}P(x_s,y_s)e^{\int_0^s R(x_\alpha ,y_\alpha )\textnormal{d}\alpha} = 0,
     \end{equation*}
which concludes the proof of Proposition \ref{explicit_one}.
\end{proof}

\bigskip
\begin{Rque}When $(x_0,y_0)\in (0,1)^2$, we also have $(x_u,y_u)\in (0,1)^2$ for all $u\in (0,s_0)$. Thus it is possible to plug approximations of the $p_{1,j}$'s and $p_{i,1}$'s into \eqref{grosse-expression} thanks to the terms $(x_u)^i$ and $(y_u)^j$. Using that $p_{1,i}= 2d/(ri)+o(1/i)$ when $i\rightarrow +\infty$, it is possible to find $I_0$ sufficiently large so that:
\begin{align*}
\lefteqn{ \frac{r}{2}\sum_{i> I_0}p_{i,1}\int_0^{s_0} i\big((x_u )^i+(y_u)^i\big)
     e^{\int_0^u R(x_\alpha ,y_\alpha )\textnormal{d}\alpha}\textnormal{d}u}\\
     \sim  &  d \int_0^{s_0} \sum_{i> I_0} \big((x_u )^i+(y_u)^i\big)
     e^{\int_0^u R(x_\alpha ,y_\alpha )\textnormal{d}\alpha}\textnormal{d}u\\
     = &
     d \int_0^{s_0} \Big(\frac{x_u^{I_0+1}}{1-x_u}+\frac{y_u^{I_0+1}}{1-y_u}\Big)e^{\int_0^u R(x_\alpha ,y_\alpha )\textnormal{d}\alpha}\textnormal{d}u\end{align*}
Thus:
\begin{multline}
     P(x_0,y_0) = \frac{r}{2}\sum_{i= 1}^{I_0} p_{i,1}\int_0^{s_0} i\big((x_u )^i+(y_u)^i\big)
     e^{\int_0^u R(x_\alpha ,y_\alpha )\textnormal{d}\alpha}\textnormal{d}u\\
      +  d \int_0^{s_0} \Big(\frac{x_u(x_u^{I_0}-y_u)}{1-x_u}+\frac{y_u(y_u^{I_0}-x_u)}{1-y_u}\Big)e^{\int_0^u R(x_\alpha ,y_\alpha)\textnormal{d}\alpha}\textnormal{d}u+o(x_0^{I_0+1}+y_0^{I_0+1}).
\end{multline}The latter expression shows that numerically, one can restrict to the computation of a finite number of probabilities $p_{i,1}$, for $i\leq I_0$.
\end{Rque}

\section{Numerical results}\label{section:numerique}

In this section, we present two different ways of approximating the extinction probabilities $p_{i,j}$.

\subsection{Probabilistic algorithm}\label{section:montecarlo}

A first possibility, if we are interested in a given initial condition $(i,j)$, is to approximate $p_{i,j}$ by Monte-Carlo simulations. For $T>0$ large, we simulate $M$ paths $(X^\ell_t,Y^\ell_t)_{t\in \{1,\dots,T\}}$ started at $(i,j)$, for $\ell \in \{1,\dots,M\}$, independent and distributed as the process $(X_t,Y_t)_{t\in \{1,\dots,T\}}$. The extinction probability is estimated by:
\begin{equation*}
\widehat{p}_{M,T}=\frac{1}{M}\sum_{\ell=1}^M \ind_{\{\exists t\leq T, X_t^\ell Y_t^\ell =0\}}.
\end{equation*}The estimator $\widehat{p}_{M,T}$ is the proportion of paths that have gone extinct before time $T$.\\

\begin{proposition}Let $(i,j)$ be the initial condition. The estimator $\widehat{p}_{M,T}$ has the following properties:
\begin{enumerate}
\item It is a convergent and unbiased estimator of $\P_{i,j}[\tau_0\leq T]$.
\item Its variance is
$\P_{i,j}[\tau_0\leq T](1-\P_{i,j}[\tau_0\leq T])/M$, and hence we have the following asymptotic 95\% confidence interval for $p_{i,j}$:
\begin{align}
\left[\widehat{p}_{M,T} - 1.96\sqrt{\frac{\widehat{p}_{M,T}(1-\widehat{p}_{M,T})}{M}} ; \widehat{p}_{M,T} + 1.96\sqrt{\frac{\widehat{p}_{M,T}(1-\widehat{p}_{M,T})}{M}}\right].\label{ic}
\end{align}
\end{enumerate}
\end{proposition}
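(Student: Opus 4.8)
The plan is to recognize that $\widehat{p}_{M,T}$ is an empirical average of $M$ i.i.d.\ Bernoulli random variables and then invoke the elementary theory of such estimators together with the central limit theorem. First I would observe that for each path $\ell$, the indicator $Z_\ell := \ind_{\{\exists t\leq T,\ X_t^\ell Y_t^\ell =0\}}$ is a Bernoulli random variable whose success event is precisely $\{\tau_0\leq T\}$ for a copy of the chain started at $(i,j)$; indeed $X_t^\ell Y_t^\ell =0$ for some $t\leq T$ means exactly that one of the axes has been hit by time $T$, which is the definition \eqref{def_tau_0} of $\tau_0$ restricted to $\{\tau_0\leq T\}$. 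Hence $\E_{i,j}[Z_\ell]=\P_{i,j}[\tau_0\leq T]$, and by linearity of expectation $\E_{i,j}[\widehat{p}_{M,T}]=\P_{i,j}[\tau_0\leq T]$, which gives unbiasedness. Convergence (a.s., hence in probability) as $M\to\infty$ is then an immediate consequence of the strong law of large numbers applied to the i.i.d.\ sequence $(Z_\ell)_{\ell\geq 1}$. This settles Point 1.

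For Point 2, I would first compute the variance of a single $Z_\ell$. Since $Z_\ell$ is Bernoulli with parameter $p:=\P_{i,j}[\tau_0\leq T]$, one has $\Var(Z_\ell)=p(1-p)$. By independence of the $M$ paths, $\Var(\widehat{p}_{M,T})=\frac{1}{M^2}\sum_{\ell=1}^M \Var(Z_\ell)=p(1-p)/M$, which is the claimed variance. The confidence interval \eqref{ic} then follows from the central limit theorem: $\sqrt{M}\,(\widehat{p}_{M,T}-p)/\sqrt{p(1-p)}$ converges in distribution to a standard Gaussian, so that $\P_{i,j}[\,|\widehat{p}_{M,T}-p|\leq 1.96\,\sqrt{p(1-p)/M}\,]\to 0.95$. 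Replacing the unknown $p(1-p)$ in the half-width by its consistent plug-in estimator $\widehat{p}_{M,T}(1-\widehat{p}_{M,T})$ is justified by Slutsky's lemma, since $\widehat{p}_{M,T}\to p$ implies $\widehat{p}_{M,T}(1-\widehat{p}_{M,T})\to p(1-p)$ in probability.

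I do not expect any genuine obstacle here, as the statement is a textbook application of the behavior of i.i.d.\ Bernoulli averages. The only point deserving a word of care is the subtle distinction between what the estimator actually measures and the target quantity $p_{i,j}$: the estimator is exactly unbiased for the \emph{truncated} probability $\P_{i,j}[\tau_0\leq T]$, not for $p_{i,j}=\P_{i,j}[\tau_0<\infty]$ itself. These differ by $\P_{i,j}[T<\tau_0<\infty]$, which vanishes as $T\to\infty$ by monotone convergence; thus the confidence interval \eqref{ic} is an asymptotic $95\%$ interval for $p_{i,j}$ only in the regime where $T$ is taken large enough that this truncation bias is negligible compared to the Monte-Carlo fluctuation of order $M^{-1/2}$. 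I would make this caveat explicit rather than conflate $\P_{i,j}[\tau_0\leq T]$ with $p_{i,j}$.
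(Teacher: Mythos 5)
Your proof is correct and follows essentially the same route as the paper, which simply observes that the indicators are i.i.d.\ Bernoulli random variables with parameter $\P_{i,j}[\tau_0\leq T]$ and invokes the law of large numbers and the central limit theorem. Your explicit remark that the estimator targets the truncated probability $\P_{i,j}[\tau_0\leq T]$ rather than $p_{i,j}$ itself, with a bias $\P_{i,j}[T<\tau_0<\infty]$ that must be made negligible relative to $M^{-1/2}$, is a worthwhile clarification that the paper leaves implicit.
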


\bigskip
\begin{proof}These results are straightforward consequences of the law of large numbers and central limit theorem, given that $\ind_{\{\exists t\leq T, X_t^\ell Y_t^\ell =0\}}$ are independent Bernoulli random variables with parameter $\P_{i,j}[\tau_0\leq T]$.
\end{proof}

Computing the extinction probabilities by Monte-Carlo methods yields good results if we are interested in a given initial condition $(i,j)$. We then have a complexity of order $M\times T$. However, biologists may be interested in investigating the extinction probabilities when the initial condition $(i,j)$ varies, and the method become computationally expensive.

\subsection{Deterministic algorithm}\label{section:calculnumerique}

For numerical approximations, we restrict ourselves to the computation of $(p_{i,j})_{i,j\in \{1,\dots,N\}}$ for a positive (large) integer $N$. In this case, \eqref{eq3} can be approximated by the solution to a linear system.

Let us define $\mathbf{p}_N=(p_{1,1},\dots,p_{1,N},p_{2,1},\dots, p_{2,N},\dots ,p_{N,1},\dots, p_{N,N})^T$ and $T_N$ is a $N^2\times N^2$-matrix with five non-zero diagonals:
\begin{align*}
T_N= & \left(\begin{array}{ccccc}
A_{1} & D & 0 & \dots &  0 \\
B_{2,1} & A_2  & D & \ddots & \vdots  \\
0 & B_{3,2} &  \ddots & \ddots &  0 \\
\vdots & \ddots & \ddots &  \ddots & D\\
0 & \dots &  0 & B_{N,N-1} & A_{N}
\end{array} \right)
\end{align*}
where $D=\frac{r}{2(r+d)}Id_N$, $A_i$ ($i\in \{1,\dots,N\}$) and $B_{i,i-1}$ ($i\in \{2,\dots N\}$) are the $N\times N$-matrices
\begin{align*}
&A_i=  \left(\begin{array}{ccccc}
- 1 & \frac{r}{2(r+d)} & 0 & \dots & 0\\
\frac{d}{r+d}\frac{2}{i+2} & -1 & \ddots & \ddots & \vdots \\
0 & \frac{d}{r+d}\frac{3}{i+3} & \ddots & \ddots & 0\\
\vdots & \ddots & \ddots & \ddots & \frac{r}{2(r+d)} \\
0 & \dots & 0 & \frac{d}{r+d}\frac{N}{i+N}& -1
\end{array}\right),\\
& B_{i,i-1}=\left(\begin{array}{cccc}
\frac{d}{r+d}\frac{i}{i+1} & 0  & \dots &  0 \\
0 & \frac{d}{r+d}\frac{i}{i+2} & \ddots  & \vdots\\
\vdots & \ddots & \ddots & 0\\
0 &  \dots & 0 & \frac{d}{r+d}\frac{i}{i+N}
\end{array}\right).\end{align*}
Let us also define the vector $b_N=(b_{1N},\dots,b_{NN})^T\in \R^{N\times N}$ such that:
\begin{align*}
& b_{1 N} = - \left(\begin{array}{c}-\frac{d}{r+d}\\
\frac{d}{r+d}\frac{1}{1+2}\\
 \vdots \\
\frac{d}{r+d}\frac{1}{1+(N-1)}\\
\frac{d}{r+d}\frac{1}{1+N} +\frac{r}{2(r+d)}\widetilde{p}_{1,N+1}\end{array}\right),
\\
&   b_{iN} = -\left(\begin{array}{c}\frac{d}{r+d}\frac{1}{1+i}\\
0\\
 \vdots \\
0 \\
\frac{r}{2(r+d)}\widetilde{p}_{i,N+1} \end{array}\right),
\qquad \mbox{for }i\in \{2,\dots,N-1\},\\
& b_{NN}=- \left(\begin{array}{c}
\frac{d}{r+d}\frac{1}{1+N}+\frac{r}{2(r+d)}\widetilde{p}_{N+1,1}\\
\frac{r}{2(r+d)}\widetilde{p}_{N+1,2}\\
 \vdots \\
\frac{r}{2(r+d)}\widetilde{p}_{N+1,N-1} \\
\frac{r}{2(r+d)}\big(\widetilde{p}_{N+1,N}+ \widetilde{p}_{N,N+1}\big)\end{array}\right)
\end{align*}where $\widetilde{p}_{i,N+1}$ and $\widetilde{p}_{N+1,N}$ are approximations of $p_{i,N+1}$ given by Proposition \ref{AsymptoticBehaviorAP}. With these notations, \eqref{eq3} rewrites as
\begin{equation*}
 T_N \mathbf{p}_N=b_N.
\end{equation*}

\subsection{Results}

We start with $r=3$ and $d=2$. For the Monte-Carlo simulation, we use $M=200$
and $T=5000$. For the deterministic method, we use $N=50$, so that $(i,j)\in
\{1,\dots,50\}^2$. Estimators of the extinction probabilities
$\widehat{p}^{(1)}_{i,j}$ and $\widehat{p}^{(2)}_{i,j}$ obtained respectively
from the methods of Sections \ref{section:montecarlo} and
\ref{section:calculnumerique} are plotted in Figure \ref{fig:r=3d=3}. The
results given by both methods are very similar, as shown by the statistics of
Table \ref{table:comparaison}. In Table \ref{table:comparaison}, we compute the
square difference between the two predictions
$(\widehat{p}^{(1)}_{i,j}-\widehat{p}^{(2)}_{i,j})^2$, the absolute difference
$|\widehat{p}^{(1)}_{i,j}-\widehat{p}^{(2)}_{i,j}|$ and the relative difference
$|\widehat{p}^{(1)}_{i,j}-\widehat{p}^{(2)}_{i,j}|/\widehat{p}^{(2)}_{i,j}$. For
the latter, we consider only the couples $(i,j)$ where $\widehat{p}^{(1)}_{i,j}$
and $\widehat{p}^{(2)}_{i,j}$ do not  vanish (else, the fraction is either not
defined or either 1 whatever the value of $\widehat{p}^{(1)}_{i,j}$).

\begin{figure}[!ht] \begin{center}
\begin{tabular}{cc}(a)& (b)\\\includegraphics[width=6cm]{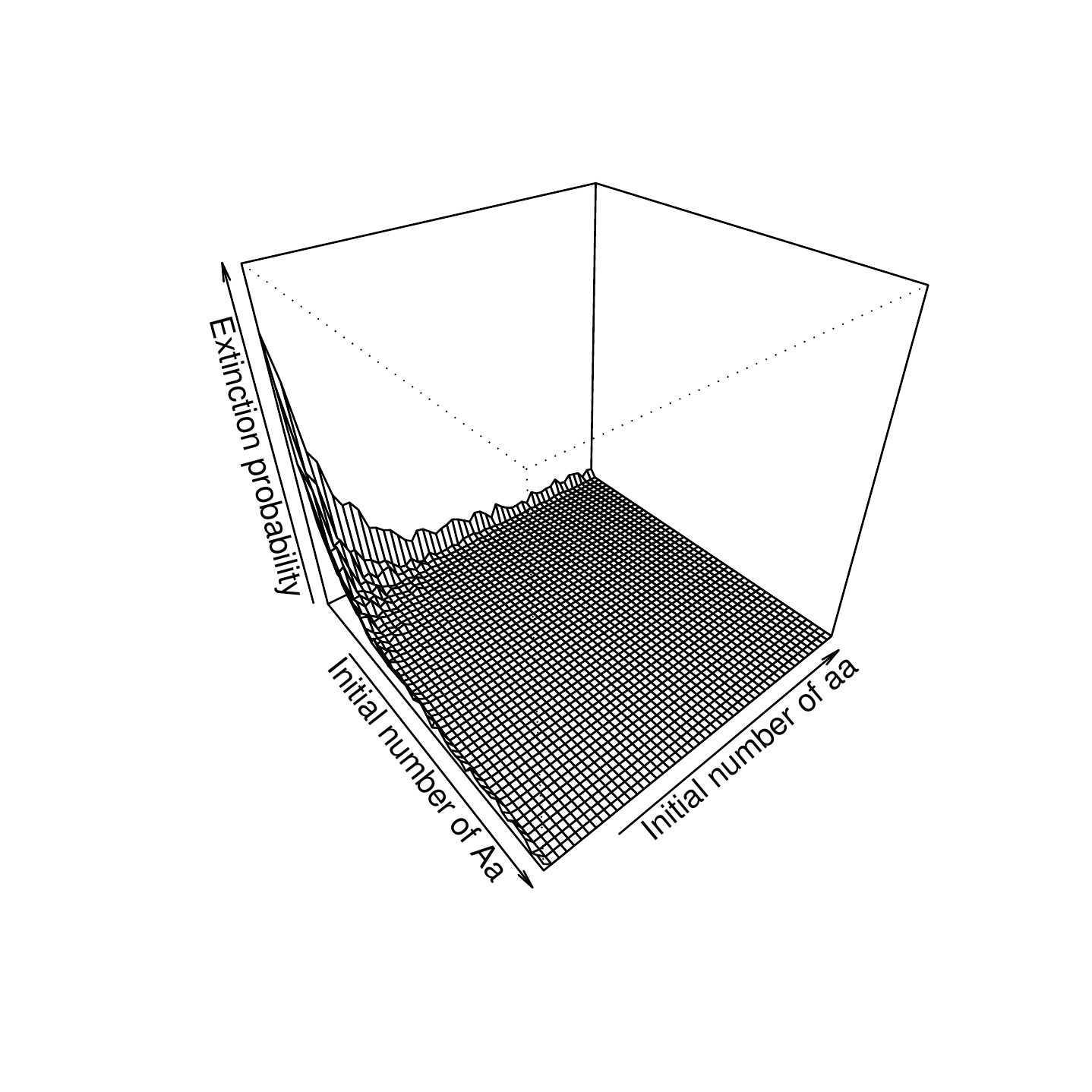}&\includegraphics[width=6cm]{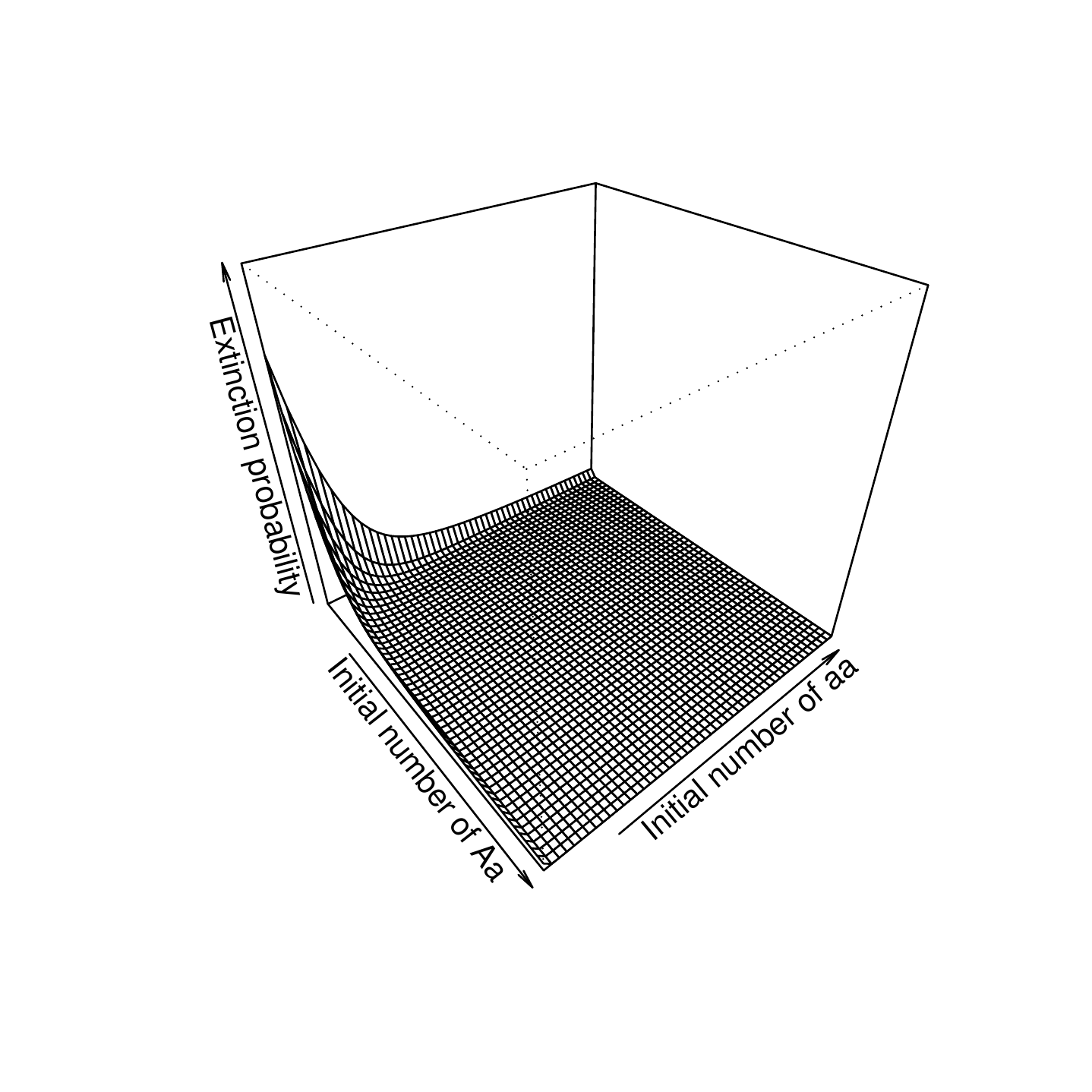}\end{tabular}\vspace{-0.5cm}
\caption{{ \textit{Estimation of the extinction probabilities $p_{i,j}$'s when $r=3$ and $d=2$: (a) with the Monte-Carlo method of Section \ref{section:montecarlo}. (b) with the deterministic method of Section \ref{section:calculnumerique}.}}}
\label{fig:r=3d=3}
\end{center}\end{figure}

\begin{table}[!ht]
\begin{center}
\begin{tabular}{|c|cccc|}
\hline
 & Mean & St.dev & Min & Max \\
 \hline
Square error & $3.24\ 10^{-5}$ & $2.47\ 10^{-4}$ & $1.68\ 10^{-36}$ & $4.63 \ 10^{-3} $ \\
Absolute error & $1.34\ 10^{-3}$ & $5.53\ 10^{-3}$ & $1.30\ 10^{-18}$ & $6.81 \ 10^{-2} $ \\
Relative error & $4.49\ 10^{-2}$ & $1.71\ 10^{-1}$ & $9.80\ 10^{-3}$ & $9.71\ 10^{-1}$  \\
\hline
\end{tabular}
\caption{{ \textit{Square, absolute and relative differences between the predictions of the stochastic method when $r=3$ and $d=2$ (Section \ref{section:montecarlo}) and of the deterministic methods (Section \ref{section:calculnumerique}). Recall that with $M=200$, the width of the confidence interval \eqref{ic} is $6.92\ 10^{-2}$.}}}
\label{table:comparaison}
\end{center}
\end{table}

\noindent To carry further the comparison of the stochastic and deterministic method, and to observe the influence of $N$ on the quality of the approximation, we compute the relative quadratic error
\begin{equation}\frac{\sqrt{\sum_{1\leq i,j\leq 10} \big(\widehat{p}^{(2)}_{i,j}-\widehat{p}^{(3)}_{i,j}\big)^2}}{\sqrt{\sum_{1\leq i,j\leq 10} \big(\widehat{p}^{(2)}_{i,j}\big)^2}} \mbox{ or }\frac{\sqrt{\sum_{1\leq i,j\leq 10} \big(\widehat{p}^{(2)}_{i,j}-\widehat{p}^{(3)}_{i,j}\big)^2}}{\sqrt{\sum_{1\leq i,j\leq 10} \big(\widehat{p}^{(3)}_{i,j}\big)^2}}\label{rqe}\end{equation}
when $\widehat{p}^{(2)}_{i,j}$ is the deterministic approximation for $N\in \{10,\dots,50\}$ and $\widehat{p}^{(3)}_{i,j}$ is either given by the deterministic approximation with $N=50$, or by the stochastic approximation $\widehat{p}^{(1)}_{i,j}$ with $M=200$. In the first case when $\widehat{p}^{(3)}_{i,j}=\widehat{p}^{(1)}_{i,j}$, the decrease in the quadratic errors stops around $N=18$ around $0.0891$. This corresponds roughly to the stochastic error of the law of large numbers \eqref{ic} which depends only on $M$. In the second case, when $\widehat{p}^{(3)}_{i,j}$ is the deterministic approximation with $N=50$, the relative quadratic errors decrease exponentially fast in $\exp(-0.6842\ N)$ ($R^2=99.92\%$).

\bigskip

In a second experiment, we choose $r=2.002$ and $d=2$. This case is more interesting in population ecology, since small populations are of interest when they are fragile and endangered species. For the Monte-Carlo simulation, we use $M=200$ and $T=5000$. For the deterministic method, we use $N=100$, so that $(i,j)\in \{1,\dots,100\}^2$. The estimated extinction probabilities $\widehat{p}^{(1)}_{i,j}$ and $\widehat{p}^{(2)}_{i,j}$ are plotted in Figure \ref{fig:r=2002d=3}, and statistics are computed in Table \ref{table:comparaison2}. Again, results from both methods are similar. This is confirmed by computing the relative quadratic errors, with the $\widehat{p}^{(2)}_{i,j}$'s obtained from the deterministic method and the $\widehat{p}^{(3)}_{i,j}=\widehat{p}^{(1)}_{i,j}$'s from the Monte-Carlo method. The decrease of this error is exponential with $N$ in $\exp(-0.0619\ N)$ ($R^2=98.98\%$) as shown in Figure \ref{fig:r=2002d=3}(c). It can be noticed that in this case, the per!
 formances of the Monte-Carlo method match better the one of the deterministic algorithm. This is due to the fact that Monte-Carlo methods fail to produce good estimates of small probabilities (see \cite{clemenconetal} and references therein).\\
When the probabilities $\widehat{p}^{(3)}_{i,j}$'s are given by the deterministic method with $N=50$ in \eqref{rqe}, we have as in the previous case ($r=3$) an exponential decrease of the relative quadratic error in $\exp(-0.1092\ N)$ ($R^2=95.07\%$).

\begin{figure}[!ht] \begin{center}
\begin{tabular}{ccc}(a)& (b) \\
\hspace{-1cm}\includegraphics[width=6cm]{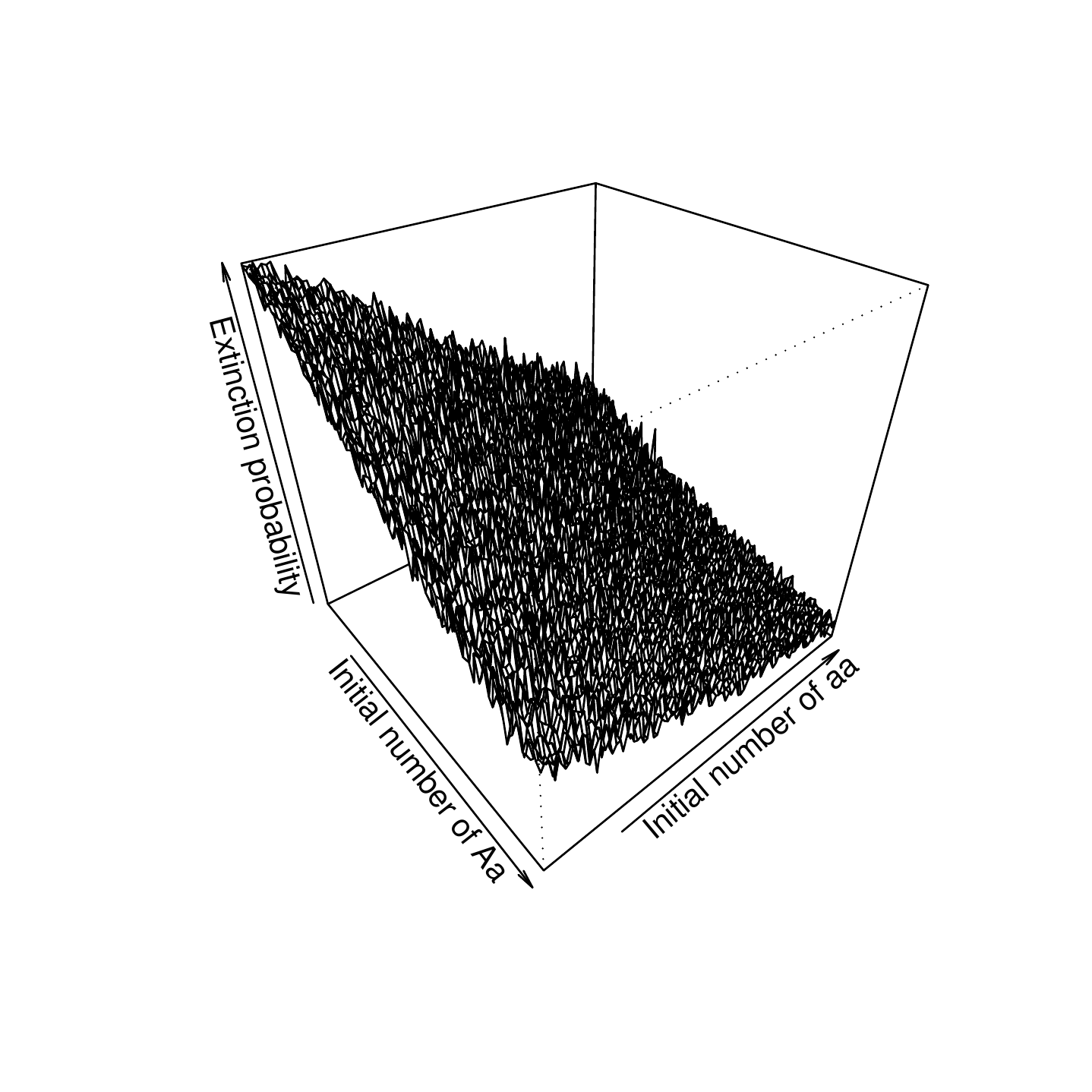}&
\hspace{-1cm}\includegraphics[width=6cm]{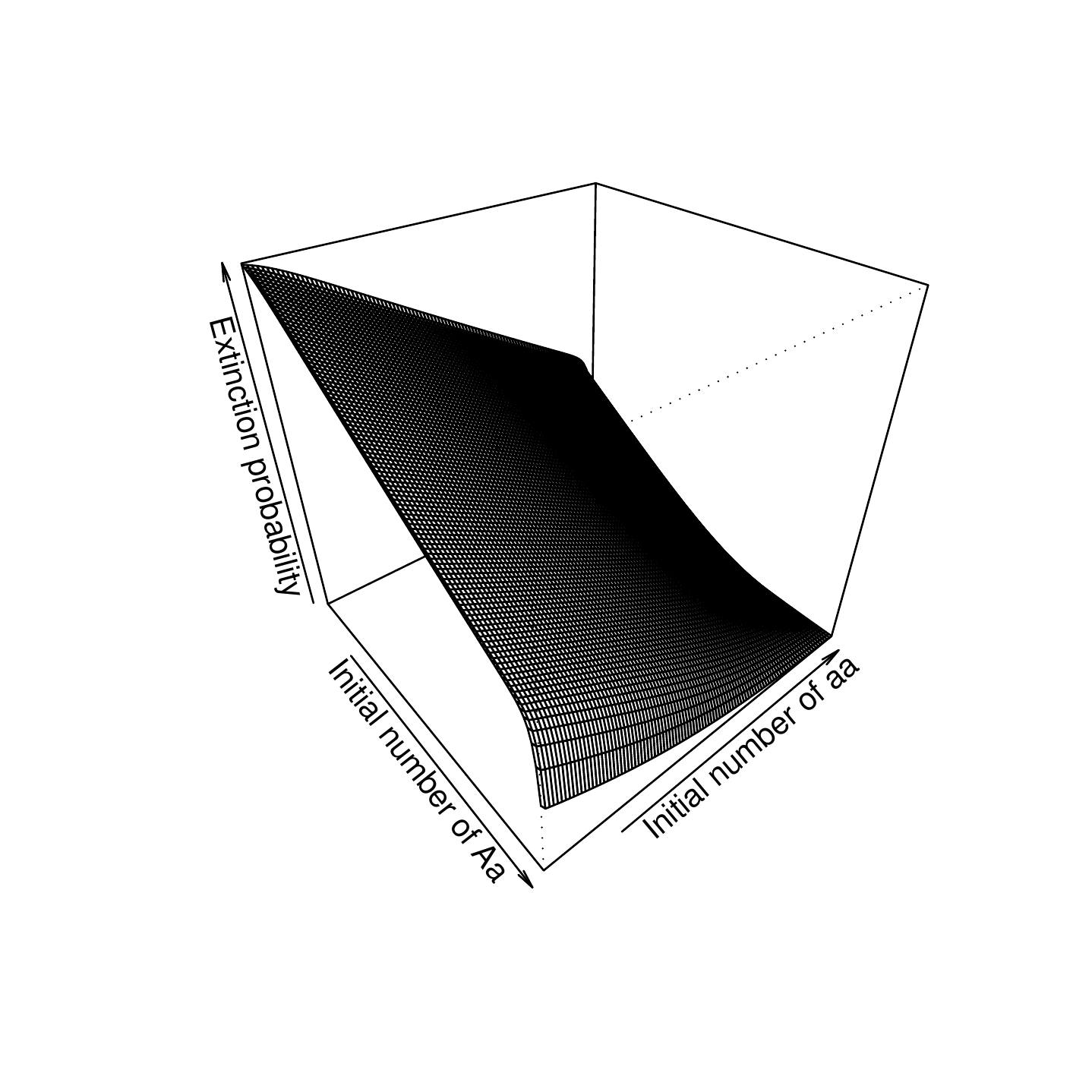} \end{tabular}
\vspace{-0.5cm}\caption{{ \textit{Estimation of the extinction probabilities $p_{i,j}$'s when $r=2.002$ and $d=2$: (a) with the Monte-Carlo method of Section \ref{section:montecarlo}. (b) with the deterministic method of Section \ref{section:calculnumerique}. }}}
\label{fig:r=2002d=3}
\end{center}\end{figure}

\begin{figure}[!ht] \begin{center}
\includegraphics[width=6cm]{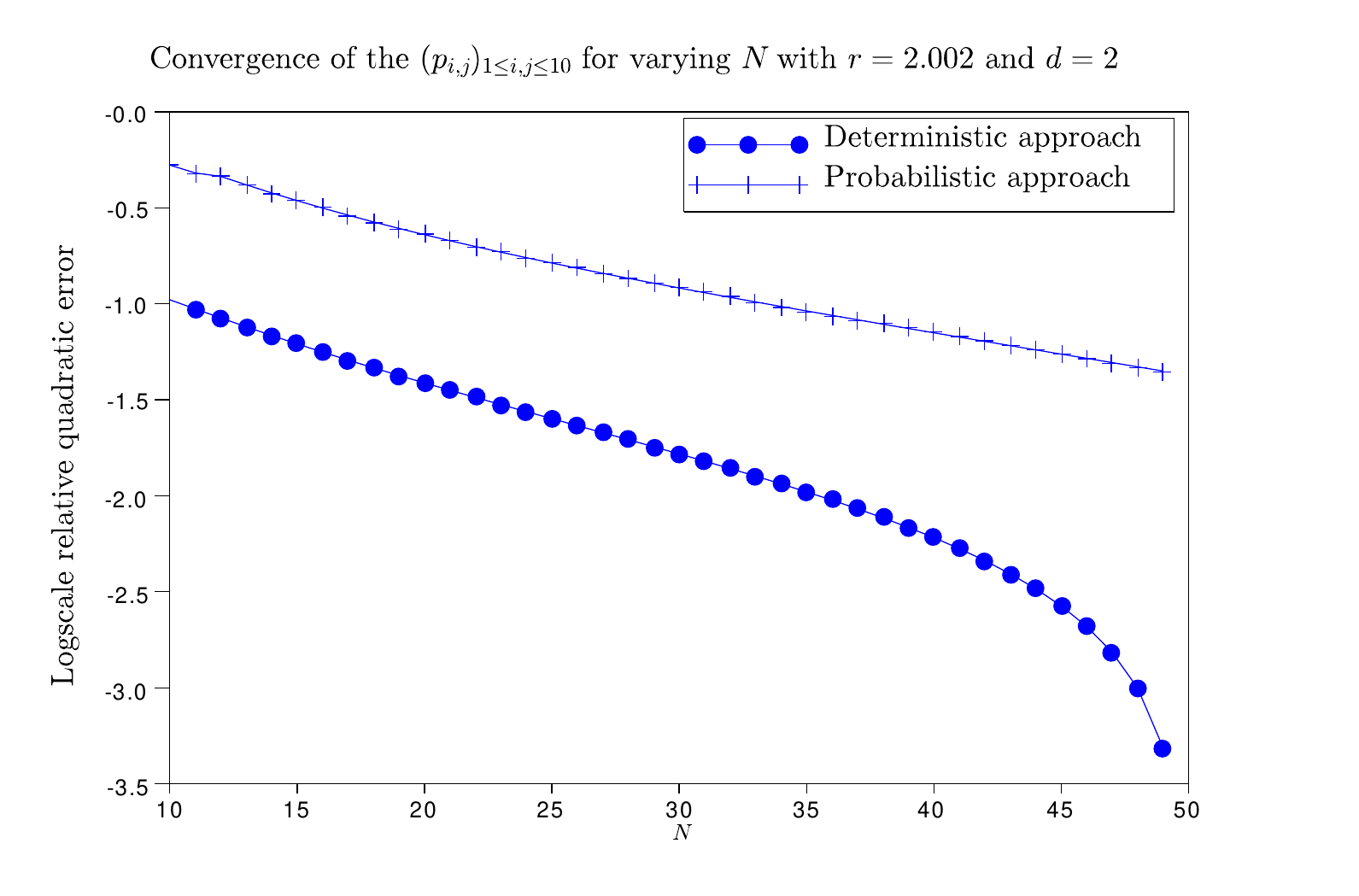}
\vspace{-0.5cm}\caption{{ \textit{Estimation of the extinction probabilities $p_{i,j}$'s when $r=2.002$ and $d=2$: Decrease with $N$ of the log of the relative quadratic errors \eqref{rqe} between the deterministic and Monte-Carlo methods ($M=200$).}}}
\label{fig:r=2002d=3bis}
\end{center}\end{figure}

\begin{table}[!ht]
\begin{center}
\begin{tabular}{|c|cccc|}
\hline
 & Mean & St.dev & Min & Max \\
 \hline
Square error & $6.27\ 10^{-3}$ & $6.25\ 10^{-3}$ & $8.42\ 10^{-10}$ & $5.31 \ 10^{-2} $ \\
Absolute error & $6.89\ 10^{-2}$ & $3.90\ 10^{-2}$ & $2.90\ 10^{-5}$ & $2.31 \ 10^{-1} $ \\
Relative error & $2.22\ 10^{-1}$ & $2.17\ 10^{-1}$ & $1.02\ 10^{-4}$ & $1$  \\
\hline
\end{tabular}
\caption{{ \textit{Square, absolute and relative differences between the predictions of the stochastic method when $r=2.002$ and $d=2$ (Section \ref{section:montecarlo}) and of the deterministic methods (Section \ref{section:calculnumerique}). As in Table \ref{table:comparaison}, since $M=200$, the width of the confidence interval \eqref{ic} is $6.92\ 10^{-2}$.}}}
\label{table:comparaison2}
\end{center}
\end{table}

\section*{Acknowledgements}
The authors warmly thank the referees for useful comments and suggestions.

%\tableofcontents

\end{document}